\newcommand{\maybebf}{}
    \renewcommand{\sectionmark}[1]{\markboth{##1}{}}
\renewcommand{\sectionmark}[1]{%
  \markboth{%
    \ifnum\value{section}>0
      \maybebf{\thesection.}\space
    \fi
    #1%
  }{%
    \ifnum\value{subsection}=0
            \thesection. #1
        \fi%
  }%
}
\renewcommand{\subsectionmark}[1]{%
    \markright{%
        \ifnum\value{subsection}>0
            \thesubsection. #1
        \fi%
    }%
}
\definecolor{zg}{gray}{0.6}
\newcommand{\zg}[1]{\textcolor{zg}{#1}}
\definecolor{hlinkcol}{HTML}{A00000}
\definecolor{hcitecol}{HTML}{308030}
\theoremstyle{plain}
\newtheorem{thm}{Theorem}[section]
\newtheorem{prop}[thm]{Proposition}
\newtheorem{lemma}[thm]{Lemma}
\newtheorem*{thm*}{Theorem}
\newtheorem*{thm20*}{Theorem \ref*{thm:main20}}
\newtheorem*{lemma*}{Lemma}
\newtheorem*{prop*}{Proposition}
\newtheorem*{conj*}{Conjecture}
\newtheorem*{cor*}{Corollary}
\theoremstyle{definition}
\newtheorem{defin}[thm]{Definition}
\newtheorem{question}[thm]{Question}
\newtheorem*{defin*}{Definition}
\theoremstyle{remark}
\newtheorem{remark}[thm]{Remark}
\newcommand{\inj}{\hookrightarrow}
\newcommand{\surj}{\twoheadrightarrow}
\newcommand{\ZZ}{\mathbb Z}
\newcommand{\QQ}{\mathbb Q}
\newcommand{\PP}{\mathbb P}
\newcommand{\RP}{\mathbb{RP}}
\newcommand{\ww}{{w}}
\newcommand{\vv}{{v}}
\newcommand{\vvd}{\overline{v}}
\tikzset{
dot/.style = {circle, fill, minimum size=#1,
              inner sep=0pt, outer sep=0pt},
dot/.default = 5pt
}
\DeclarePairedDelimiter\abs{\lvert}{\rvert}%
\DeclarePairedDelimiter\norm{\lVert}{\rVert}%
\let\oldabs\abs
\def\abs{\@ifstar{\oldabs}{\oldabs*}}
\let\oldnorm\norm
\def\norm{\@ifstar{\oldnorm}{\oldnorm*}}
\DeclareMathOperator{\diag}{diag}
\newcommand{\fbseries}{
\unskip\setBold\aftergroup\unsetBold\aftergroup\ignorespaces}
\newcommand{\setBoldness}[1]{\def\fake@bold{#1}}
\tikzset{
    rots/.style={anchor=south, rotate=90, inner sep=.5mm}
}
\newcommand\restr[2]{{%
  \left.\kern-\nulldelimiterspace %
  #1 %
  \vphantom{\big|} %
  \right|_{#2} %
  }}
\author{Jacopo G. Chen\thanks{Scuola Normale Superiore, Pisa, Italy. Email: \href{mailto:jacopo.chen@sns.it}{\texttt{jacopo.chen@sns.it}}}}
\title{Non-cobordant hyperbolic manifolds}
\begin{document}

\DTMsetdatestyle{mydateformat}
\date{}
\maketitle
\begingroup
\centering\small
\textbf{Abstract}\par\smallskip
\begin{minipage}{\dimexpr\paperwidth-9.5cm}
In all dimensions $n \ge 4$ not of the form $4m+3$, we show that there exists a closed hyperbolic $n$-manifold which is not the boundary of a compact $(n+1)$-manifold. 
The proof relies on the relationship between the cobordism class and the fixed point set of an involution on the manifold, together with a geodesic embedding of Kolpakov, Reid and Slavich.
We also outline a possible approach to cover the dimensions $4m+3 \ne 2^k-1$.
\end{minipage}
\par\endgroup
\section{Introduction}\label{sec:intro}
The (unoriented) cobordism relation on smooth, closed $n$-manifolds gives rise to the \emph{cobordism group} $\mathcal N_n$, whose elements are equivalence classes of cobordant manifolds, and whose group operation is induced by the disjoint union: $[M] + [M'] \coloneqq [M \sqcup M']$. Moreover, the direct sum of all $\mathcal N_n$ for $n \ge 0$ can be given a graded ring structure, with multiplication induced by the Cartesian product of manifolds. The structure of the \emph{cobordism ring} $\mathcal N_*$ was described by Thom in his seminal paper~\cite{thom}: it is a free polynomial ring over $\ZZ_2$
\begin{equation}
    \mathcal N_* \simeq \ZZ_2[x_2, x_4, x_5, x_6, \dots],
\end{equation}
with one generator $x_i$ for each $i \ne 2^k - 1$. Of course, this allows one to easily determine the individual groups $\mathcal N_n$.

It is also of interest to impose additional structure on manifolds and the cobordisms between them: for instance, we may study the \emph{oriented cobordism} or \emph{spin cobordism} groups. In the context of hyperbolic geometry, Long and Reid~\cite{geometric-bd} introduced the concept of a manifold that \emph{bounds geometrically}, that is, a hyperbolic manifold that is the geodesic boundary of another hyperbolic manifold.

Returning to unoriented cobordism, we may instead impose conditions on the Riemannian metric: it is a classical result of Hamrick and Roysted~\cite{flat-bnd} that all flat closed manifolds are boundaries; on the other hand, for even $n$, the real projective spaces $\RP^n$ are examples of non-cobordant spherical manifolds, since their Euler characteristic is $1$. Moreover, by Ontaneda's Riemannian hyperbolization~\cite[Corollary~1]{ontaneda}, for any given $\varepsilon>0$, we can realize \emph{every} cobordism class with closed manifolds of pinched negative sectional curvature $K \in [-1-\varepsilon, -1]$.

Along this direction, in the present paper we will analyze the unoriented cobordism relation between closed hyperbolic manifolds. Specifically, we prove:
\begin{thm}\label{thm:main20}
    For each $n \ge 4$, $n \not \equiv 3 \pmod 4$, there exists a connected, non-cobordant closed hyperbolic $n$-manifold.
\end{thm}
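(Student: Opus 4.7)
The plan is to exploit a classical relationship between the unoriented cobordism class of a smooth $n$-manifold and the fixed-point set of an involution acting on it. By a theorem of Conner--Floyd, if $T\colon M\to M$ is a smooth involution on a closed $n$-manifold with fixed set $F=\bigsqcup_i F_i$ and with $\nu_i$ the normal bundle of the $i$-th component, then
\[
    [M] \;=\; \sum_i [\RP(\nu_i\oplus\varepsilon^1)] \quad\in\;\mathcal N_n,
\]
where $\varepsilon^1$ is a trivial line bundle and $\RP(E)$ is the projectivization of a vector bundle $E$. For an \emph{isometric} involution of a hyperbolic manifold, each $F_i$ is automatically totally geodesic, so the fixed data is itself hyperbolic: this is what makes the formula tractable in our setting.

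The overall scheme is an induction on the dimension in steps of $4$, run separately for the three residue classes $n\equiv 0,1,2\pmod 4$, each seeded by a base case in dimension $4$, $5$, $6$ respectively. For the inductive step, assume we already have a non-cobordant closed hyperbolic $(n{-}4)$-manifold $N$. Using the Kolpakov--Reid--Slavich (KRS) geodesic embedding theorem, the plan is to realize $N$ as a codimension-$4$ totally geodesic submanifold inside a closed hyperbolic $n$-manifold $M$, arranged so that $M$ admits an isometric involution $\tau$ whose fixed set is exactly $N$ (acting as $-I$ on the rank-$4$ normal bundle, e.g.\ via two commuting $\pi$-rotations) and whose normal bundle is trivial. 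The formula then collapses to
\[
    [M] \;=\; [N\times\RP^4] \;=\; [N]\cdot[\RP^4],
\]
which is nonzero in $\mathcal N_n$ since $[\RP^4]\neq 0$ in $\mathcal N_4$ and the cobordism ring $\mathcal N_*\simeq\ZZ_2[x_2,x_4,x_5,\ldots]$ has no zero divisors. Because each step raises the dimension by $4$, the residue $n\pmod 4$ is preserved, and the three seeds together cover precisely the $n\ge 4$ with $n\not\equiv 3\pmod 4$.

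For the base cases I would exhibit direct constructions. In dimension $4$, a closed hyperbolic manifold with odd Euler characteristic satisfies $w_4[M]=\chi(M)\bmod 2\neq 0$ and is therefore non-cobordant; such examples are available in the literature (e.g.\ from the Ratcliffe--Tschantz census). For dimension $5$ (necessarily non-orientable, since $\Omega_5=0$) and dimension $6$, I would look for explicit hyperbolic manifolds detected by a non-trivial Stiefel--Whitney number, drawing on Coxeter-polytope constructions or arithmetic lattices.

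The main obstacle will be the equivariant part of the KRS step: given the non-cobordant hyperbolic $N$, one must build a closed hyperbolic $M$ of four higher dimension together with an isometric involution fixing $N$ and with trivial rank-$4$ normal bundle --- equivalently, a closed hyperbolic $n$-orbifold whose codimension-$4$ singular stratum is exactly $N$ and whose $\ZZ_2$-branched cover is a manifold. Arranging the triviality of $\nu$ (so the formula collapses to a clean product) and verifying that $N$ is the entire fixed locus are the delicate points. The residue restriction $n\not\equiv 3\pmod 4$ is the visible fingerprint of this induction scheme: no seed is provided in dimension $3$, consistent with $\mathcal N_3=0$ forbidding any non-cobordant hyperbolic $3$-manifold.
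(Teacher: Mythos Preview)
Your plan shares the paper's starting point (the Conner--Floyd formula and the KRS embedding), but it has a genuine gap precisely at the point you flag as ``delicate'': arranging that the fixed locus of the involution on $M$ is \emph{exactly} $N$. When one embeds $N$ geodesically via KRS and takes the reflection $\tau'$ in the hyperplane containing $N$, there is no reason for $N$ to be the whole of $\operatorname{Fix}(\tau')$; in general the fixed set acquires additional components of various dimensions coming from other elements $\gamma\tau'$ with $\gamma\in\pi_1(M')$. The Conner--Floyd sum then contains extra, uncontrolled projective-bundle terms and no longer collapses to $[N\times\RP^4]$. You also propose jumping four dimensions at once with a single $\ZZ_2$-action by $-I$ on a rank-$4$ normal bundle, but KRS is intrinsically codimension~$1$; iterating it four times only compounds the problem of stray fixed components, and you give no mechanism for eliminating them. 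Finally, the base cases in dimensions $5$ and $6$ are asserted rather than produced.

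The paper's proof avoids all of this by never trying to control the full fixed locus. Instead it applies a homomorphism $\varphi^n\colon\mathcal N_n\to\ZZ_2$ that kills decomposable classes and shows that, on each projective bundle $\PP(\nu X\oplus\varepsilon^1)$, the value of $\varphi^n$ depends only on the normal classes $w_i(\nu X)$ via fixed polynomials $I_{n,d}$. It then climbs one dimension at a time: at each KRS step $(M^m,\tau^m)\hookrightarrow(M',\tau')$, either $I(n,M',\tau')=1$ already, or one performs a cut-and-paste \emph{twist} of $M'$ along $M^m$ by $\tau^m$, which replaces the unwanted fixed component $M^m$ by the fixed set of $\tau^m$ (with normal bundles stabilized by $\varepsilon^1$) and restores $I=1$. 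The seed is a single hyperbolic $3$-manifold (a small cover of the dodecahedron) whose reflection involutions already have $I(n,\cdot,\cdot)=1$ for all $n\equiv 0,1,2\pmod 4$, so no separate $5$- and $6$-dimensional base cases are needed. Note also that your scheme would output decomposable classes $[N]\cdot[\RP^4]^k$, whereas the paper detects the indecomposable generator $x_n$; both would suffice for the theorem, but only the paper supplies a mechanism that actually closes.
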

Note that the two-dimensional case is realized by any hyperbolic surface diffeomorphic to $\mathbb T^2 \# \RP^2$, while the case $n = 4$ is realized by two manifolds of Euler characteristic $17$, due to Ratcliffe and Tschantz~\cite{rt-instantons}. We have not been able to find any other examples of non-cobordant closed hyperbolic manifolds in the literature.

By contrast, we may arrange for our examples to have even Euler characteristic, which implies that in dimension $4$ they realize a different cobordism class from either Ratcliffe--Tschantz $4$-manifold (see Section~\ref{sec:even}).

The proof of Theorem~\ref{thm:main20} relies on describing the cobordism class of a manifold $M$ in terms of \emph{projective bundles} on the fixed submanifolds $X$ of a nontrivial involution $\tau\colon M \to M$. By a formula of Thom, we can compute the value of a certain homomorphism $\varphi^n \colon \mathcal N_n \surj \ZZ_2$ on such a projective bundle, discovering that it depends only on the Stiefel--Whitney classes of the normal bundle $\nu X$ (and not on the tangent classes $w_i(X)$). 
The formula, stated in Theorem~\ref{thm:phi-n-sum-Ind}, may be of independent interest. It takes the following form:
 \begin{equation}\label{eq:formula-redux}
    \varphi^n[M] = \sum_{d = 0}^{n-1} \sum_{X \in F_d(\tau)} I_{n,d}(w_1(\nu X), \dots, w_d(\nu X)),
\end{equation}
where $F_d(\tau)$ is the set of all fixed submanifolds of $\tau$ having dimension $d$, and $I_{n,d}$ are certain polynomial expressions.

In Section~\ref{sec:geod}, we build upon a result of Kolpakov, Reid and Slavich~\cite{embedding}, which roughly gives a way to geodesically embed an arithmetic hyperbolic $n$-manifold $M$ into an arithmetic hyperbolic $(n+1)$-manifold $M'$; specifically, we extend the result to the context of arithmetic manifolds with involutions. Moreover, we show that by cutting $M'$ along $M$ and reattaching with a twist given by the involution, we obtain another $(n+1)$-manifold $M''$ in which $M$ also embeds geodesically.

The right hand side of~(\ref{eq:formula-redux}) is a well-defined invariant $I(n, M, \tau)$ for a manifold with involution $(M, \tau)$ of any dimension, coinciding with $\varphi^n[M]$ when $\dim M = n$: using this fact, we construct a non-cobordant $n$-manifold $M^n$ by starting from a particular $3$-manifold with $I(n, M, \tau) = 1$, and repeatedly applying the Kolpakov--Reid--Slavich embedding to increase the dimension. At each step, we may choose whether to perform the twist or not; it is exactly this freedom that allows us to ensure $I(n, M, \tau) = 1$ at each step, until we reach dimension $n$.

Furthermore, for $3$-manifolds, the expression $I(n, M, \tau)$ is periodic in $n$ of period $4$, so we manage to cover all dimensions $n \equiv 0,1,2 \pmod 4$ with only three starting examples. The limitations of our method for the remaining case $n \equiv 3 \pmod 4$, which we discuss in Section~\ref{sec:higher}, arise from a specific pattern in the expressions $I_{n,d}$. By choosing suitable starting manifolds, we describe how one might construct non-cobordant hyperbolic manifolds in all dimensions $n \ne 2^k-1$. Note that the cobordism groups $\mathcal N_{2^k-1}$ are actually nontrivial for $k \ge 3$, but constructing non-cobordant hyperbolic manifolds in these dimensions requires a different approach.

Some questions remain open:
\begin{question}
    Do there exist non-cobordant hyperbolic manifolds of dimension $4m+3$?
\end{question}
\begin{question}
    Which cobordism classes can be realized by hyperbolic manifolds? What if the manifolds are required to be connected?
\end{question}
   
Additionally, since our method does not easily allow for the construction of orientable manifolds, we may ask:
\begin{question}
    Do there exist non-cobordant orientable hyperbolic manifolds?
\end{question}
In dimensions $2$ and $4$ the answer is known to be negative. Indeed, an orientable surface bounds a handlebody; on the other hand, a closed orientable hyperbolic $4$-manifold has zero signature by the Hirzebruch signature formula~\cite{geometric-bd}, thus it is cobordant.

Similar questions may of course be asked for the orientable cobordism group and its higher analogs. 

Finally, a natural question arises regarding $\text{spin}^c$ structures on hyperbolic manifolds. In a previous paper~\cite{spinc}, the author showed that, in all dimensions $\ge 5$, there exists a closed orientable hyperbolic manifold $M$ with $w_3(M) \ne 0$, which in turn implies that $M$ does not admit a $\text{spin}^c$ structure. The argument also relies on iterating the Kolpakov--Reid--Slavich embedding, but is simpler than the one in the present paper, in that it directly keeps track of nonzero Stiefel--Whitney classes at each step. Now note that, if a $5$-manifold $M$ is not cobordant, then its Stiefel--Whitney number $w_2w_3$ is nontrivial, implying $w_3(M) \ne 0$. Hence, a stronger version of Theorem~\ref{thm:main20}, giving orientable manifolds, would imply the main result of~\cite{spinc} in dimension $5$. It would be interesting to generalize this argument to higher dimensions:
\begin{question}
    Can the proof strategy of Theorem~\ref{thm:main20} be adapted to show the existence of non-$\text{spin}^c$ closed hyperbolic manifolds via the nonvanishing of Stiefel--Whitney numbers?
\end{question}
\subsection*{Structure of the paper}
In Section~\ref{sec:cobord}, we define the unoriented cobordism ring $\mathcal N_*$, its decomposable classes and the homomorphisms $\varphi^n$. Then, in Section~\ref{sec:proj-inv}, we relate the cobordism class of a manifold to the fixed point set of an involution defined on it, and we compute its image through $\varphi^n$. In Section~\ref{sec:geod}, we introduce the Kolpakov--Reid--Slavich codimension-$1$ embedding, extend its definition to manifolds with involutions, and define an alternate, twisted embedding. Finally, in Section~\ref{sec:non-cobord}, we prove Theorem~\ref{thm:main20} and discuss the remaining case $n\equiv 3 \pmod 4$.
\subsection*{Acknowledgments}
I would like to thank my advisor Bruno Martelli for bringing my attention to this problem and for his support during the writing of this paper.
\section{The unoriented cobordism ring}\label{sec:cobord}
In this section, we recall some properties of the unoriented cobordism (henceforth simply \emph{cobordism}) relation on manifolds.

\begin{defin}
    Let $M$, $M'$ be two closed $n$-manifolds. A \emph{cobordism} between $M$ and $M'$ is a compact $(n+1)$-manifold with boundary $W$ such that $\partial W \simeq M \sqcup M'$. If such a $W$ exists, we say that $M$ and $M'$ are \emph{cobordant} and have the same \emph{cobordism class} $[M] = [M']$. Finally, we say that $M$ is \emph{cobordant} if $[M] = [\varnothing]$.
\end{defin}
Cobordism classes of $n$-manifolds form a group $\mathcal N_n$ under disjoint union, with $[\varnothing]$ as the identity; moreover, every class is its own inverse, because $\partial(M \times [0,1]) \simeq M \sqcup M$, making $\mathcal N_n$ into an abelian $2$-group.

The Cartesian product of manifolds induces a well-defined multiplication of cobordism classes, and hence a graded ring structure on the direct sum of all cobordism groups
\begin{equation}
    \mathcal N_* \coloneqq \sum_{i \ge 0} \mathcal N_i.
\end{equation}

The structure of the cobordism ring is that of a free polynomial algebra on $\ZZ_2$, as proved by Thom:
\begin{thm}[{\cite[Théorème~IV.12]{thom}}]
    We have
    \begin{equation}
        \mathcal N_* \simeq \ZZ_2[x_2, x_4, x_5, x_6, \dots],
    \end{equation}
    with a generator $x_i$ of degree $i$ for each $i \ne 2^k-1$.
\end{thm}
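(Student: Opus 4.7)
The plan is to reduce the computation to stable homotopy via the Pontryagin--Thom construction and then exploit the structure of $H^*(BO;\ZZ_2)$ as a module over the Steenrod algebra.

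First I would identify $\mathcal N_n$ with the stable homotopy group $\lim_k \pi_{n+k}(MO(k))$ of the Thom spectrum $MO$: by transversality, a map $S^{n+k} \to MO(k)$ yields a framed $n$-submanifold of $S^{n+k}$, and conversely every closed $n$-manifold embeds into a sphere with a classified normal bundle and so defines such a map, with the framed bordism class depending only on $[M]$. Disjoint union and Cartesian product correspond to the wedge and smash operations, so the ring structure on $\mathcal N_*$ is read off from $\pi_*(MO)$.

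Next, via the Thom isomorphism, $H^*(MO;\ZZ_2)$ is isomorphic to $H^*(BO;\ZZ_2) = \ZZ_2[w_1, w_2, \ldots]$ (shifted by the degree of the Thom class) as a module over the Steenrod algebra $\mathcal A$, the action on the $w_i$ being determined by the Wu formula. The technical heart of Thom's argument is to show that this module is \emph{free} over $\mathcal A$; once this is established, a theorem of Serre (alternatively, collapse of the mod-$2$ Adams spectral sequence) gives a splitting $MO \simeq \bigvee_\alpha \Sigma^{n_\alpha} H\ZZ_2$, so that $\pi_*(MO) = \mathcal N_*$ is a direct sum of copies of $\ZZ_2$ indexed by an $\mathcal A$-basis of $H^*(MO;\ZZ_2)$. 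Counting the indecomposables $H^*(MO;\ZZ_2) / \mathcal A^+ \cdot H^*(MO;\ZZ_2)$ shows that the resulting Poincaré series is $\prod_{i \ne 2^k-1} (1-t^i)^{-1}$, matching the claimed polynomial algebra.

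To upgrade the additive statement to the ring isomorphism, I would produce explicit manifold generators: the even real projective spaces $\RP^{2k}$ in degree $2k$, and the Dold manifolds $P(m,n) = (S^m \times \CC P^n) / \ZZ_2$ in the remaining degrees $\ne 2^k-1$. Polynomial independence of these generators is checked by computing enough Stiefel--Whitney numbers to exhibit, for each partition into allowed parts, a characteristic number that distinguishes the corresponding monomial from all others. The main obstacle is the freeness of $H^*(MO;\ZZ_2)$ over $\mathcal A$: this is Thom's central calculation, and it is also the step where the exceptional degrees $2^k-1$ enter, since precisely in those degrees the Steenrod-algebra action absorbs any would-be new indecomposable.
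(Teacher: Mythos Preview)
The paper does not give its own proof of this theorem: it is stated with a citation to Thom's original paper \cite[Th\'eor\`eme~IV.12]{thom} and used as a black box. So there is no argument in the paper to compare your proposal against.

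That said, your outline is a faithful (if modernized) sketch of the classical proof. Two small remarks. First, the splitting $MO \simeq \bigvee \Sigma^{n_\alpha} H\ZZ_2$ and the Adams-spectral-sequence language postdate Thom; his 1954 argument worked more directly with Serre's mod-$\mathcal C$ theory and an explicit analysis of $H^*(MO(k);\ZZ_2)$, though the content is the same. Second, the Dold manifolds you invoke for the odd-degree generators are not part of Thom's proof---Thom established the polynomial structure and the even generators $[\RP^{2k}]$, but the explicit odd-degree representatives came later in Dold's 1956 paper (as the present paper itself notes immediately after the cited theorem). For the bare statement $\mathcal N_* \simeq \ZZ_2[x_i : i \ne 2^k-1]$ you do not actually need explicit odd generators; the dimension count from the free $\mathcal A$-module structure, together with the fact that Stiefel--Whitney numbers detect everything, already forces the polynomial algebra.
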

Thom also showed~\cite[80]{thom} that the real projective spaces $\RP^i$ can be taken as representatives for $x_i$, $i$ even, while Dold~\cite{dold} introduced the manifolds $P(m,n)$ as representatives for $x_i$, $i$ odd:
\begin{thm}[{\cite[Satz~3]{dold}}]
    Representatives for each generator $x_i$, $i \ne 2^k-1$, can be chosen as follows. If $i$ is even, then $x_i = [\RP^i]$. If $i$ is odd, write $i = 2^r(2s+1)-1$; then $x_i = [P(2^r-1, s2^r)]$, where
    \begin{equation}
        P(m, n) \coloneqq (S^m \times \mathbb{CP}^n)/\{(x,[y]) \sim (-x, [\overline y])\}.
    \end{equation}
\end{thm}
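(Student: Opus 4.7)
The plan is to use Milnor's $s$-number $s_n\colon \mathcal{N}_n \to \ZZ_2$ as a detector of polynomial generators. Given a closed $n$-manifold $M$, $s_n[M]$ is the Stiefel--Whitney number associated to the Newton polynomial $\sum_i t_i^n$ in the Stiefel--Whitney roots of $TM$. First I would establish two properties: (i) $s_n$ is $\ZZ_2$-linear under disjoint union, and (ii) $s_n[M_1 \times M_2] = 0$ whenever both $\dim M_i > 0$. Property (ii) follows from the primitivity of the Newton polynomial, $s_n(E \oplus F) = s_n(E) + s_n(F)$, applied to $T(M_1 \times M_2) = \pi_1^{*}TM_1 \oplus \pi_2^{*}TM_2$: the pullback of a class of degree $n$ from one factor pairs trivially with $[M_1 \times M_2]$ unless that factor has dimension $n$ and the other has dimension $0$. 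Thus $s_n$ descends to a homomorphism on the quotient by decomposables, and by Thom's theorem it suffices to check $s_n[M] \neq 0$ to conclude that $[M]$ represents the generator $x_n$.

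For even $i$, the classical identification $T\RP^i \oplus \varepsilon^1 \cong (i+1)\gamma$ gives $w(\RP^i) = (1+a)^{i+1}$, where $a \in H^1(\RP^i;\ZZ_2)$ is the generator. All $i+1$ Stiefel--Whitney roots are then equal to $a$, so $s_i[\RP^i] = (i+1)\,a^i[\RP^i] \equiv 1 \pmod{2}$, settling this case.

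The main technical obstacle is the odd case $i = 2^r(2s+1) - 1$ with $s \ge 1$. Writing $m = 2^r - 1$ and $k = s \cdot 2^r$, so that $i = m + 2k$, I would first describe $P(m,k)$ as a locally trivial bundle over $\RP^m$ with fiber $\mathbb{CP}^k$ and monodromy given by complex conjugation, then decompose $TP(m,k)$ into a horizontal part (pulled back from $T\RP^m$) and a vertical part (the tangent bundle along the fibers). Using a variant of the Euler sequence for $\mathbb{CP}^k$ adapted to this bundle, one expresses $w(P(m,k))$ in terms of two generators $a, c$ of $H^{*}(P(m,k); \ZZ_2)$ satisfying the relations computed by Dold. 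Then $s_{m+2k}[P(m,k)]$ reduces to a sum of binomial coefficients modulo $2$, and its nonvanishing precisely when $2s+1 > 1$ (equivalently $i \neq 2^{\ell}-1$) should be verifiable via Lucas' theorem, exploiting the dyadic shape $m = 2^r - 1$, $k = s\cdot 2^r$ of the exponents. The combinatorial identification of the correct coefficient and the Lucas computation is where I expect to have to work hardest.
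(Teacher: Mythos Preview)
The paper does not prove this theorem at all: it is stated with the citation \cite[Satz~3]{dold} and immediately used, with no argument supplied. So there is nothing in the paper to compare your proposal against.

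That said, your outline is the standard route to Dold's result, and it is worth noting that the detector you call $s_n$ is precisely the homomorphism the paper later introduces under the name $\varphi^n$ in Section~\ref{sec:decompos} (compare your $\sum_i t_i^n$ with equation~\eqref{eq:varphi-formula}). Your properties (i) and (ii) are exactly the statement that $\varphi^n$ vanishes on decomposables, which the paper also uses. The even case is handled correctly. For the odd case, your plan---describe $P(m,k)$ as a $\mathbb{CP}^k$-bundle over $\RP^m$, split the tangent bundle into horizontal and vertical parts, and compute $w(P(m,k))$ in terms of generators of $H^*(P(m,k);\ZZ_2)$---is exactly Dold's own argument. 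The final step does indeed reduce to a Lucas-type parity check on binomial coefficients, and you are right that this is where the work lies; Dold carries it out explicitly in the cited paper, so if you want to complete the argument you can follow his computation there.
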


\subsection{Decomposable classes}\label{sec:decompos}
In order to simplify the problem of constructing non-cobordant hyperbolic $n$-manifolds, we define nontrivial homomorphisms $\varphi^n\colon \mathcal N_n \surj \ZZ_2$, and then proceed to find a hyperbolic $M^n$ such that $\varphi^n[M^n] = 1$.

\begin{defin}
    A class $y \in \mathcal N_*$ is \emph{decomposable} if it is represented by a product of two manifolds of positive dimension.
\end{defin}
Decomposable classes are sums of products of two or more generators. In every dimension $n \ne 2^k-1$, the group $\mathcal N_n$ is generated by decomposable classes together with $x_n$: hence, it is natural to define a map $\varphi^n$ that sends $x_n$ to $1$ and vanishes on decomposable classes.

The value $\varphi^n[M^n]$ can be elegantly computed, following~\cite{thom} (see also the expository paper~\cite{huang}). Indeed, given the total Stiefel--Whitney class
\begin{equation}
    w(M^n) \coloneqq 1 + w_1 + \dots + w_n,
\end{equation}
let $\lambda_1, \dots, \lambda_n$ be the formal roots of the polynomial $p(t) \coloneqq t^n + w_1t^{n-1} + \dots + w_n$, obtained by homogenizing $w(M^n)$ with a new degree-$1$ variable $t$. Then we have
\begin{equation}\label{eq:varphi-formula}
    \varphi^n[M^n] = \sum_i \lambda_i^n.
\end{equation}
More rigorously, we observe that the sum of powers in~(\ref{eq:varphi-formula}) can be written in terms of elementary symmetric polynomials in the roots $\lambda_i$, which are simply the coefficients $w_1, \dots, w_n$. As expected, $\varphi^n[M^n]$ is a polynomial expression of degree $n$ in the Stiefel--Whitney classes or, in other words, a sum of Stiefel--Whitney numbers.

\section{Projective bundles and involutions}\label{sec:proj-inv}
Given a rank-$k$ vector bundle $\xi$ over a closed $n$-manifold $M$, we can define the \emph{projective bundle} $\mathbb{P}(\xi)$ as a fiberwise quotient of the unit sphere bundle $S(\xi)$ by the antipodal map. This construction results in a bundle over $M$ with fiber $\mathbb{RP}^{k-1}$, whose total space, which we shall also denote by $\mathbb{P}(\xi)$, is a closed $(n+k-1)$-manifold.

The total Stiefel--Whitney class of $\mathbb{P}(\xi)$ can be described in terms of the Stiefel--Whitney classes of $M$ and $\xi$, as follows.
\begin{thm}[{\cite[Theorem~23.3]{diff-periodic}, \cite[517]{borel-hirzebruch}}]\label{thm:borel-hirzebruch}
    Let $\xi$ be a rank-$k$ vector bundle over a closed $n$-manifold $M$ and let $p\colon \mathbb P(\xi) \twoheadrightarrow M$ be the bundle map. Define the classes $\ww_i \coloneqq p^*(w_i(M))$, $\vv_i \coloneqq p^*(w_i(\xi))$. Then we have
    \begin{equation}\label{eq:cohom-proj}
        H^*(\mathbb P(\xi)) \simeq H^*(M)[c] / (c^k + c^{k-1}\vv_1 + \dots + \vv_k),
    \end{equation}
    where $c \in H^1(\mathbb P(\xi))$ is the characteristic class of the double cover $S(\xi) \twoheadrightarrow \mathbb P(\xi)$. In particular, $H^*(\mathbb P(\xi))$ is a free $H^*(M)$-module with basis $\{1, c, \dots, c^{k-1}\}$.
    
    Moreover, the total Stiefel--Whitney class of the manifold $\mathbb P(\xi)$ is
    \begin{equation}
        w(\mathbb P(\xi)) = (1 + \ww_1 + \dots + \ww_n)\cdot 
        ((1+c)^k + (1+c)^{k-1}\vv_1 + \dots + \vv_k).
    \end{equation}
\end{thm}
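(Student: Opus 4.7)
The approach combines the Leray--Hirsch theorem with a splitting-principle computation, centered on the tautological line bundle $\gamma \subset p^*\xi$ on $\mathbb P(\xi)$, whose fiber over a line $\ell \in \mathbb P(\xi_x)$ is $\ell$ itself. A preliminary observation is that $c$ coincides with $w_1(\gamma)$, since $S(\xi) \surj \mathbb P(\xi)$ is precisely the unit sphere bundle of $\gamma$.

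For the cohomology ring isomorphism, I would first invoke Leray--Hirsch: each fiber of $p$ is $\mathbb{RP}^{k-1}$ with cohomology $\ZZ_2[c_0]/(c_0^k)$, and $c$ restricts fiberwise to $c_0$, so the classes $1, c, \dots, c^{k-1}$ form a global $H^*(M)$-basis for $H^*(\mathbb P(\xi))$. To identify the relation satisfied by $c^k$, I would tensor the tautological inclusion $\gamma \hookrightarrow p^*\xi$ with $\gamma^*$: the bundle $\gamma^* \otimes p^*\xi$ contains the trivial line subbundle $\gamma^* \otimes \gamma \simeq \epsilon^1$, hence admits a nowhere-vanishing section, and so $w_k(\gamma^* \otimes p^*\xi) = 0$. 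Using $\gamma^* \simeq \gamma$ with $\ZZ_2$ coefficients, together with the splitting principle applied to $\xi$ (writing $w(\xi) = \prod_i(1+\alpha_i)$ formally), this top Stiefel--Whitney class expands to $c^k + \vv_1 c^{k-1} + \cdots + \vv_k$, which yields the announced relation.

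For the Stiefel--Whitney class of the total space, I would start from the smooth tangent-bundle splitting
\begin{equation*}
    T\mathbb P(\xi) \simeq p^*TM \oplus T^v\mathbb P(\xi),
\end{equation*}
reducing the problem, by Whitney multiplicativity, to computing $w(T^v\mathbb P(\xi))$. The essential geometric input is the real Euler sequence
\begin{equation*}
    T^v\mathbb P(\xi) \oplus \epsilon^1 \simeq \gamma^* \otimes p^*\xi,
\end{equation*}
obtained by identifying the tangent space at $\ell \in \mathbb P(\xi_x)$ with $\Hom(\ell, \xi_x/\ell)$ and appending the trivial $\Hom(\ell,\ell)$ direction. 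A formal-roots manipulation
\begin{equation*}
    \prod_i(1 + c + \alpha_i) = (1+c)^k \prod_i \Bigl(1 + \tfrac{\alpha_i}{1+c}\Bigr) = \sum_{j=0}^k (1+c)^{k-j} \vv_j
\end{equation*}
then identifies $w(\gamma^* \otimes p^*\xi)$ with the claimed second factor, and multiplying by $p^*w(M)$ finishes the proof.

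The main obstacle I expect is the careful setup of the two geometric inputs — the identity $c = w_1(\gamma)$ and the real Euler sequence above — together with the verification that tensor-product Stiefel--Whitney formulas over $\ZZ_2$ obey the complex Chern-root formalism used in the computation. Once these identifications are in place, the Leray--Hirsch argument and the formal-roots algebra are routine.
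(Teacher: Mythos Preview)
The paper does not supply its own proof of this theorem: it is quoted verbatim from the cited references \cite[Theorem~23.3]{diff-periodic} and \cite[517]{borel-hirzebruch} and used as a black box in the subsequent computation of $\varphi^n$. There is therefore no in-paper argument to compare against.

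That said, your outline is the standard and correct route to this result. The identification $c = w_1(\gamma)$, the Leray--Hirsch argument giving the free $H^*(M)$-module structure, the vanishing of $w_k(\gamma\otimes p^*\xi)$ from the tautological section, and the real Euler sequence $T^v\mathbb P(\xi)\oplus\varepsilon^1 \simeq \gamma^*\otimes p^*\xi$ are exactly the ingredients used in the cited sources, and your formal-roots expansion $\prod_i(1+c+\alpha_i) = \sum_j (1+c)^{k-j}\vv_j$ is the right way to finish. The only point worth flagging is cosmetic: over $\ZZ_2$ the distinction between $\gamma$ and $\gamma^*$ is immaterial, so you could streamline by working with $\gamma$ throughout; and the step ``nowhere-vanishing section $\Rightarrow w_k=0$'' should be phrased via a metric splitting $\gamma\otimes p^*\xi \simeq \varepsilon^1 \oplus \eta$ rather than appealing to an Euler class, since real bundles need not be orientable. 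None of this affects the validity of the argument.
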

As we will see in the following, the cobordism class of a manifold equipped with an involution can be written in terms of projective bundles over the fixed submanifolds of the involution.

\subsection{Fixed points of involutions}\label{sec:involutions}
Let $M$ be a connected closed $n$-manifold equipped with a nontrivial involution $\tau$. Following~\cite[Section~24]{diff-periodic}, the fixed point set $\operatorname{Fix}(\tau)$ is a disjoint union of closed submanifolds of $M$. Hence, we may define $F(\tau)$ as the set of all fixed submanifolds, and $F_i(\tau)$ as the set of all such submanifolds of dimension $i$, for $i \ge 0$; note that $F_i(\tau)$ is trivially empty for $i > n$, and $F_n(\tau)$ is also empty since $\tau$ is nontrivial and $M$ is connected.

The following result is of fundamental importance to our discussion.
\begin{thm}[{\cite[Theorem~24.2]{diff-periodic}}]\label{thm:proj-bundles-fix}
    Let $M$ be a connected closed $n$-manifold and let $\tau$ be a nontrivial involution on $M$. Then we have
    \begin{equation}\label{eq:proj-bundles-fix}
        [M] = \sum_{X \in F(\tau)} [\mathbb P(\nu X \oplus \varepsilon^1)],
    \end{equation}
    where $\varepsilon^1$ denotes the trivial line bundle.
\end{thm}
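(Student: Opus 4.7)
The plan is to construct a compact $(n+1)$-manifold $V$ with boundary $M \sqcup \bigsqcup_{X \in F(\tau)} \mathbb P(\nu X \oplus \varepsilon^1)$, which implies the stated equality in $\mathcal N_n$ since every class is its own inverse. The key geometric object is $W_0 = M \times [-1, 1]$ equipped with the involution $T(x,t) = (\tau(x), -t)$: its fixed locus is exactly $\operatorname{Fix}(\tau) \times \{0\} = \bigsqcup_{X \in F(\tau)} X \times \{0\}$, and $T$ swaps the two boundary components $M \times \{\pm 1\}$ freely, so their image in any quotient by $T$ is canonically a single copy of $M$. If $T$ had no fixed points, $W_0/T$ would already be the desired cobordism with empty right-hand sum; in general we must first eliminate the fixed points via a spherical (real) blowup.

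By the equivariant tubular neighborhood theorem (which follows from averaging a Riemannian metric to make it $\tau$-invariant and then using the exponential map), a neighborhood of each $X \subset M$ is $\tau$-equivariantly diffeomorphic to the disk bundle of $\nu X$ with $\tau$ acting by $-1$ on fibres; combined with the $t \mapsto -t$ factor, a neighborhood of $X \times \{0\}$ in $W_0$ is $T$-equivariantly diffeomorphic to the disk bundle of $\nu X \oplus \varepsilon^1$ under the fibrewise antipodal action. I would then perform the spherical blowup along each such $X \times \{0\}$: concretely, remove the open disk bundle and glue in a collar $[0, \varepsilon] \times S(\nu X \oplus \varepsilon^1)$ via fibrewise polar coordinates $(r, v) \mapsto rv$. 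The resulting compact manifold $\widetilde W$ carries a lifted involution $\tilde T$ that acts as the fibrewise antipodal map on each new boundary component $S(\nu X \oplus \varepsilon^1)$ and agrees with $T$ elsewhere. Since $T$ was already free on the complement of the fixed set and the antipodal map on a sphere is free, $\tilde T$ acts freely on all of $\widetilde W$.

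The quotient $V := \widetilde W / \tilde T$ is therefore a compact $(n+1)$-manifold, with boundary
\begin{equation}
\partial V \;\cong\; M \;\sqcup\; \bigsqcup_{X \in F(\tau)} S(\nu X \oplus \varepsilon^1)/\{\pm 1\} \;=\; M \;\sqcup\; \bigsqcup_{X \in F(\tau)} \mathbb P(\nu X \oplus \varepsilon^1),
\end{equation}
which is exactly the cobordism we sought. I expect the main technical obstacle to be the careful gluing of the local spherical blowups into a global smooth manifold with the correct equivariant structure; once this is checked, together with the equivariant linearization near the fixed strata, the rest of the argument is essentially bookkeeping. The appearance of $\nu X \oplus \varepsilon^1$ rather than $\nu X$ alone in the statement is precisely the contribution of the extra $[-1,1]$ factor, on which $T$ acts by negation.
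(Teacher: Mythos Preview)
The paper does not give its own proof of this theorem; it is quoted verbatim from Conner--Floyd (\cite[Theorem~24.2]{diff-periodic}) and used as a black box. Your argument is correct and is in fact precisely the classical Conner--Floyd proof: form $M\times[-1,1]$ with the involution $(x,t)\mapsto(\tau(x),-t)$, excise equivariant tubular neighbourhoods of the fixed set $\operatorname{Fix}(\tau)\times\{0\}$, and pass to the free quotient. Your explanation of why the extra $\varepsilon^1$ summand appears (it is the normal direction contributed by the interval factor, on which $T$ acts by~$-1$) is exactly the point.

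One cosmetic remark: the ``spherical blowup with collar gluing'' language is more elaborate than necessary. Since each $X\times\{0\}$ lies in the interior of $W_0$, simply deleting the open disk bundle of $\nu X\oplus\varepsilon^1$ already leaves a compact smooth manifold with boundary (no corners arise), and the new boundary component is automatically the sphere bundle $S(\nu X\oplus\varepsilon^1)$ with the antipodal $T$-action. No separate gluing step is required, so the ``main technical obstacle'' you anticipate does not actually occur.
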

Note that the Stiefel--Whitney classes of $\nu X \oplus \varepsilon^1$ are the same as those of $\nu X$. Hence, by Theorems~\ref{thm:borel-hirzebruch} and~\ref{thm:proj-bundles-fix}, the cobordism class of $M$ is determined by the Stiefel--Whitney classes of $X$ and $\nu X$ for all fixed submanifolds $X$.

\subsection{Applying \texorpdfstring{$\varphi^n$}{phi\^{}n}}
Let us compute the value of $\varphi^n$ applied to such projective bundles, using Theorem~\ref{thm:borel-hirzebruch} and the method of Section~\ref{sec:decompos}.

Let $X \in F_d(\tau)$ be a fixed submanifold of dimension $d$ and let $w_i \coloneqq w_i(X)$, $v_i \coloneqq w_i(\nu X)$. Then, by Theorem~\ref{thm:borel-hirzebruch}, the total Stiefel--Whitney class of $[\mathbb P(\nu X \oplus \varepsilon^1)]$ is
\begin{equation}
(1 + \ww_1 + \dots + \ww_d)\cdot 
        ((1+c)^{n-d+1} + (1+c)^{n-d}\vv_1 + \dots + (1+c)^{n-2d+1}\vv_d).
\end{equation}
After homogenizing, we obtain the polynomial
\begin{equation}
    p(t) \coloneqq (t^d + t^{d-1}\ww_1 + \dots + \ww_d) 
        ((t+c)^{n-d+1} + (t+c)^{n-d}\vv_1 + \dots + (t+c)^{n-2d+1}\vv_d),
\end{equation}
with formal roots $\lambda_1, \dots, \lambda_{n+1}$.
The number $\varphi^n[\PP(\nu X \oplus \varepsilon^1)]$ can then be computed as the formal sum $\lambda_1^n + \dots + \lambda_{n+1}^n$.

Let $\{\alpha_1, \dots, \alpha_d\}$ and $\{\beta_1, \dots, \beta_{n-d+1}\}$ be, respectively, the formal roots of the two polynomials
\begin{equation}
    \begin{split}
        p_1(t) &\coloneqq t^d + t^{d-1}\ww_1 + \dots + \ww_d, \\
        p_2(t) &\coloneqq t^{n-d+1} + t^{n-d}\vv_1 + \dots + t^{n-2d+1}\vv_d.
    \end{split}
\end{equation}
Since $p(t) = p_1(t)\cdot p_2(t+c)$, we have
\begin{equation}
    (\lambda_1, \dots, \lambda_{n+1}) = (\alpha_1, \dots, \alpha_d, \beta_1 + c, \dots, \beta_{n-d+1} + c).
\end{equation}
Now, note that $\alpha_1^n + \dots + \alpha_d^n$ is an expression of degree $n$ in the classes $\ww_i$, which vanishes since $d < n$. Therefore
\begin{align}
    \varphi^n[\PP(\nu X \oplus \varepsilon^1)]
    &= \sum_{i = 1}^{n-d+1}(\beta_i+c)^n \\
    &= \sum_{j = 0}^d \binom{n}{j} c^{n-j}\sum_{i = 1}^{n-d+1}\beta_i^j, \label{eq:formula-i-phi}
\end{align}
where we discard powers of $\beta_i$ with exponent larger than $d$.
For $j \ge 1$, the sum of $j$-th powers in~(\ref{eq:formula-i-phi}) is a fixed polynomial expression, independent of the number of roots, involving the elementary symmetric polynomials in the $\beta_i$ (that is, the classes $v_1, \dots, v_d$): say $p_j(v_1, \dots, v_d)$. For $j = 0$, it takes the value $n-d+1$. Hence, we have
\begin{equation}\label{eq:phi-n-powersums}
    \varphi^n[\PP(\nu X \oplus \varepsilon^1)]
    = (n-d+1)c^n + \sum_{j = 1}^d \binom{n}{j} c^{n-j} p_j(v_1, \dots, v_d).
\end{equation}
We can simplify the formula further by reducing the powers of $c$ modulo the relation $c^{n-d+1} + c^{n-d}v_1 + \dots + c^{n-2d+1}v_d = 0$. Define the \emph{dual Stiefel--Whitney classes} $\vvd_i$, satisfying the formal relation
\begin{equation}
    \bigg(\sum_{j \ge 0}\vv_j t^j\bigg)\cdot \bigg(\sum_{j \ge 0}\vvd_j t^j\bigg) = 1.
\end{equation}
(Note that we can recursively express each $\vvd_i$ as a polynomial over $v_1, \dots, v_d$.)
We claim that $c^{n-j}$ reduces to $c^{n-d}\vvd_{d-j}$, plus terms of lower degree in $c$ (and higher degree in the $v_i$), which can be discarded. This is obvious for $j = d$.

For $j < d$, by using little-$o$ notation $o(c^k)$ for terms of $c$-degree less than $k$, we have:
\begin{align}
    0&= \bigg(\sum_{k=0}^{d-j-1} c^{d-j-1-k}\vvd_k\bigg)
    \bigg(\sum_{k=0}^d c^{n-d+1-k}v_k\bigg) \\
    &= \bigg(\sum_{k = 0}^{d-j} c^{n-j-k} \sum_{m = 0}^{k} \vvd_m v_{k-m} \bigg) - c^{n-d} \vvd_{d-j} + o(c^{n-d})\\
    &= c^{n-j} + c^{n-d}\vvd_{d-j} + o(c^{n-d}),
\end{align}
as desired. Hence, by reducing the powers of $c$ in~(\ref{eq:phi-n-powersums}), we obtain
\begin{equation}\label{eq:phi-n-powersums2}
    \varphi^n[\PP(\nu X \oplus \varepsilon^1)]
    = c^{n-d}\bigg[(n-d+1)\vvd_d + \sum_{j = 1}^d \binom{n}{j} \vvd_{d-j} p_j(v_1, \dots, v_d)\bigg].
\end{equation}
As an element of $H^n(\PP(\nu X \oplus \varepsilon^1))\simeq \ZZ_2$, this equals the expression in square brackets as an element of $H^d(X) \simeq \ZZ_2$, since $\{1, c, \dots, c^{n-d}\}$ is a basis. Thus, we have proved:

\begin{thm}\label{thm:phi-n-sum-Ind}
    Let $M$ be a connected closed $n$-manifold, $n \ne 2^k-1$, and let $\tau$ be a nontrivial involution on $M$. Then we have
    \begin{equation}
        \varphi^n[M] = \sum_{d = 0}^{n-1} \sum_{X \in F_d(\tau)} I_{n,d}(w_1(\nu X), \dots, w_d(\nu X)),
    \end{equation}
    where $I_{n,d}$ is a polynomial expression in the free variables $v_1, \dots, v_d$, defined for any $n, d \ge 0$ and given by
    \begin{equation}\label{eq:defin-Ind}
        I_{n,d}(v_1, \dots, v_d) \coloneqq (n-d+1)\vvd_d
        + \sum_{j=1}^d \binom{n}{j}\vvd_{d-j}p_j(v_1, \dots v_d).
    \end{equation}
\end{thm}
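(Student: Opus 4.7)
The plan is to combine Theorem~\ref{thm:proj-bundles-fix}, which writes $[M]$ as a sum of cobordism classes of projective bundles $[\PP(\nu X \oplus \varepsilon^1)]$ over the fixed submanifolds, with the linearity of $\varphi^n$ to reduce the claim to a local computation on each fixed $X \in F_d(\tau)$. The target is to show that $\varphi^n[\PP(\nu X \oplus \varepsilon^1)]$ equals $I_{n,d}(v_1,\dots,v_d)$ interpreted as the top cohomology class of $X$, so that summing over $d$ and over $X \in F_d(\tau)$ recovers the stated formula.

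To carry out the local computation, I would first apply Theorem~\ref{thm:borel-hirzebruch} to write $w(\PP(\nu X \oplus \varepsilon^1))$ as a product of two factors, one depending on $w_i(X)$ and one on the classes $\vv_i$ and $c$, and then homogenize with a degree-one variable $t$ to get a polynomial $p(t) = p_1(t)\cdot p_2(t+c)$. Because the formal roots split into a set $\{\alpha_i\}_{i=1}^d$ contributing a degree-$n$ polynomial in the $w_i(X)$ with $d < n$ (hence vanishing) and a set $\{\beta_i + c\}_{i=1}^{n-d+1}$, formula~(\ref{eq:varphi-formula}) reduces $\varphi^n[\PP(\nu X \oplus \varepsilon^1)]$ to $\sum_i (\beta_i + c)^n$. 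Expanding binomially and collecting, the coefficient of $c^{n-j}$ becomes $\binom{n}{j}$ times the $j$-th power sum in the $\beta_i$, which by Newton's identities is a fixed polynomial $p_j(v_1,\dots,v_d)$ in the elementary symmetric functions of the $\beta_i$, i.e.\ in the $v_i$.

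The main technical step, and the one I expect to require the most care, is to reduce powers $c^{n-j}$ with $j < d$ down to the basis $\{1,c,\dots,c^{n-d}\}$ of $H^*(\PP(\nu X \oplus \varepsilon^1))$ as an $H^*(X)$-module. I would introduce the dual Stiefel--Whitney classes $\vvd_i$ by the formal identity $(\sum \vv_j t^j)(\sum \vvd_j t^j) = 1$ and use the relation $\sum_{k=0}^d c^{n-d+1-k}v_k = 0$ from~(\ref{eq:cohom-proj}) to prove inductively that $c^{n-j} \equiv c^{n-d}\vvd_{d-j}$ modulo terms of lower $c$-degree. The clean way is to multiply the relation by $\sum_{k=0}^{d-j-1} c^{d-j-1-k}\vvd_k$, expand, and observe that the product of $\vvd$'s and $\vv$'s collapses to give the desired equivalence. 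Lower $c$-degree terms can be discarded because we will ultimately take the coefficient of $c^{n-d}$.

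Finally, since all the lower-order $c$-contributions in~(\ref{eq:phi-n-powersums}) vanish after reduction, the sum collapses into $c^{n-d}$ times the bracketed expression $(n-d+1)\vvd_d + \sum_{j=1}^d \binom{n}{j}\vvd_{d-j}p_j(v_1,\dots,v_d)$. Integration over the fiber (equivalently, reading off the $c^{n-d}$-coefficient in the free $H^*(X)$-module structure) identifies this with an element of $H^d(X) \simeq \ZZ_2$, which is precisely $I_{n,d}(v_1,\dots,v_d)$. Summing over $d$ and $X \in F_d(\tau)$ using Theorem~\ref{thm:proj-bundles-fix} and the fact that $F_n(\tau) = \varnothing$ yields the theorem.
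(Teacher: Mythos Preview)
Your proposal is correct and follows essentially the same route as the paper: apply Theorem~\ref{thm:proj-bundles-fix} and linearity of $\varphi^n$, use Theorem~\ref{thm:borel-hirzebruch} to factor the homogenized total class as $p_1(t)p_2(t+c)$, discard the $\alpha$-contribution for degree reasons, expand $\sum_i(\beta_i+c)^n$ binomially, and reduce the powers $c^{n-j}$ to $c^{n-d}\vvd_{d-j}$ via the dual classes and the relation~(\ref{eq:cohom-proj}). The only cosmetic differences are that you invoke Newton's identities and ``integration over the fiber'' explicitly where the paper speaks of power-sum polynomials and the free $H^*(X)$-module basis, and that the paper justifies discarding lower $c$-degree terms by noting their $v_i$-degree exceeds $d$ (hence they land in $H^{>d}(X)=0$), which is the substance behind your ``take the coefficient of $c^{n-d}$''.
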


\begin{remark}\label{rem:periodic}
    By applying the map $\varphi^n$, we have eliminated the dependence on the tangent Stiefel--Whitney classes $w_i(X)$. Moreover, the expression $I_{n,d}$ depends on $n$ only through the coefficients $n-d+1$ and $\binom{n}{j}$, $0 < j \le d$, whose parity is periodic in $n$; a common period is the smallest power of two $2^q$ such that $q > 0$ and $2^q > d$.
\end{remark}
Using the computer algebra system SageMath~\cite{sagemath}, we can easily determine a few values of $I_{n,d}$ (Table~\ref{tab:Ind}).

\begin{table}[ht]
    \centering
    \begin{tabular}{c|ccccc}
    \toprule
    \diagbox{$n$}{$d$} & $0$ & $1$ & $2$ & $3$ & $4$ \\ \midrule
 \zg{$0$} & \zg{$1$} & \zg{$0$} & \zg{$v_1^2 + v_2$} &             \zg{$0$} &  \zg{$v_1^4 + v_1^2v_2 + v_2^2 + v_4$} \\
 \zg{$1$} & \zg{$0$} & \zg{$0$} &       \zg{$v_1^2$} &  \zg{$v_1v_2 + v_3$} &                  \zg{$v_1^4 + v_1v_3$} \\
 $2$ & $1$ & $0$ &         \zg{$v_2$} &         \zg{$v_1^3$} &                     \zg{$v_2^2 + v_4$} \\
 \zg{$3$} & \zg{$0$} & \zg{$0$} &           \zg{$0$} &             \zg{$0$} &                           \zg{$v_1^4$} \\ \midrule
 $4$ & $1$ & $0$ & $v_1^2 + v_2$ &             $0$ &          \zg{$v_1^2v_2 + v_2^2 + v_4$} \\
 $5$ & $0$ & $0$ &       $v_1^2$ &  $v_1v_2 + v_3$ &                          $v_1v_3$ \\
 $6$ & $1$ & $0$ &         $v_2$ &         $v_1^3$ &             $v_1^4 + v_2^2 + v_4$ \\
 \zg{$7$} & \zg{$0$} & \zg{$0$} &           \zg{$0$} &             \zg{$0$} &                               \zg{$0$} \\ \midrule
 $8$ & $1$ & $0$ & $v_1^2 + v_2$ &             $0$ &  $v_1^4 + v_1^2v_2 + v_2^2 + v_4$ \\
 $9$ & $0$ & $0$ &       $v_1^2$ &  $v_1v_2 + v_3$ &                  $v_1^4 + v_1v_3$ \\
$10$ & $1$ & $0$ &         $v_2$ &         $v_1^3$ &                     $v_2^2 + v_4$ \\
$11$ & $0$ & $0$ &           $0$ &             $0$ &                           $v_1^4$ \\ \midrule
$12$ & $1$ & $0$ & $v_1^2 + v_2$ &             $0$ &          $v_1^2v_2 + v_2^2 + v_4$ \\
$13$ & $0$ & $0$ &       $v_1^2$ &  $v_1v_2 + v_3$ &                          $v_1v_3$ \\
$14$ & $1$ & $0$ &         $v_2$ &         $v_1^3$ &             $v_1^4 + v_2^2 + v_4$ \\
\zg{$15$} & \zg{$0$} & \zg{$0$} &           \zg{$0$} &             \zg{$0$} &                               \zg{$0$} \\ \midrule
$16$ & $1$ & $0$ & $v_1^2 + v_2$ &             $0$ &  $v_1^4 + v_1^2v_2 + v_2^2 + v_4$ \\
\bottomrule
    \end{tabular}
    \caption{Values of $I_{n,d}$ for $n \le 16$, $d \le 4$. Cells in gray correspond to invalid combinations ($n \le d$ or $n = 2^k-1$) for which $I_{n,d}$ can still be defined.
    Note the periodicity of the columns.}
    \label{tab:Ind}
\end{table}

A pattern emerges for $n$ of the form $2^k-1$, where exactly the first $2^k$ terms of the sequence $(I_{n,d})_{d\ge 0}$ are zero. Indeed, we can prove:
\begin{prop}\label{prop:initial-zeros}
    Let $k,m > 0$ with $m$ odd. We have
    \begin{itemize}
        \item $I_{m2^k-1, d} = 0$ for all $0 \le d < 2^k$;
        \item $I_{m2^k-1, 2^k} = v_1^{2^k}$.
    \end{itemize}
\end{prop}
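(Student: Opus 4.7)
The plan is to treat $I_{n,d}$ as a polynomial identity in $\ZZ_2[v_1, \dots, v_d]$ and collapse it using two ingredients: the value of $\binom{m 2^k - 1}{j} \pmod 2$ from Lucas' theorem, and a generating-function form of Newton's identity for the power sums $p_j$.

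First, I would unpack the binomial coefficients. Writing $n = m 2^k - 1$ in binary: since $m$ is odd, the lower $k$ bits of $n$ are all $1$, the bit at position $k$ is $0$, and the higher bits form the binary expansion of $(m-1)/2$. Lucas' theorem then yields $\binom{n}{j} \equiv 1 \pmod 2$ for every $0 \le j \le 2^k - 1$, while $\binom{n}{2^k} \equiv 0 \pmod 2$.

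Second, I would establish an algebraic identity relating the dual classes $\vvd_j$ to the power sums $p_j$. Setting $V(t) = \sum_k v_k t^k = \prod_i (1 + \beta_i t)$, $\bar V(t) = 1/V(t) = \sum_k \vvd_k t^k$, and $P(t) = \sum_{j \ge 1} p_j t^j$, logarithmic differentiation gives $t V'(t)/V(t) = P(t)$ in characteristic $2$. Differentiating $V \bar V = 1$ then yields $t \bar V'(t) = \bar V(t) P(t)$, and comparing coefficients of $t^d$ produces the key identity $\sum_{j=1}^d \vvd_{d-j} p_j = d \vvd_d$.

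With these two facts in hand, the proposition follows by direct substitution. For $0 \le d \le 2^k - 1$, every binomial in the defining sum for $I_{n,d}$ equals $1$ mod $2$, so the sum collapses to $d \vvd_d$; combined with the $(n-d+1) \vvd_d$ term this gives $I_{n,d} = (n+1) \vvd_d = m 2^k \vvd_d \equiv 0 \pmod 2$. For $d = 2^k$, the coefficient $(n - 2^k + 1) = (m-1) 2^k$ is even, and the $j = 2^k$ summand drops because $\binom{n}{2^k} \equiv 0$; what remains is $\sum_{j=1}^{2^k-1} \vvd_{2^k - j} p_j$, which by the key identity equals $2^k \vvd_{2^k} + p_{2^k} \equiv p_{2^k} \pmod 2$. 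Finally, Frobenius in characteristic $2$ gives $p_{2^k} = p_1^{2^k} = v_1^{2^k}$. The argument is purely formal once these two ingredients are in place; the only point requiring a small amount of care is the $d = 2^k$ case, where the missing $j = 2^k$ term is precisely what prevents the answer from vanishing.
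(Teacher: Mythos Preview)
Your proof is correct and follows essentially the same route as the paper: both hinge on the Newton-type identity $d\,\vvd_d = \sum_{j=1}^d \vvd_{d-j}\,p_j$ over $\ZZ_2$ together with the vanishing pattern of $\binom{m2^k-1}{j}$ mod $2$, and both finish with Frobenius for $p_{2^k}=v_1^{2^k}$. The only cosmetic differences are that the paper first reduces to $m=1$ via the periodicity of $I_{n,d}$ and then invokes the identity by identifying $\vvd_i$ with the complete homogeneous symmetric function $h_i$ in $\Lambda^{\ZZ_2}$, whereas you handle general $m$ directly via Lucas and derive the identity yourself from $V\bar V=1$.
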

\begin{proof}
    A common period of $I_{n, 0}, \dots, I_{n, 2^k}$, as functions of $n$, is $2^{k+1}$. Hence, without loss of generality, we can assume $m = 1$. 

    For $0 \le d \le 2^k$, we have, since $n-d+1 \equiv 2^k + d \equiv d \pmod 2$,
    \begin{equation}\label{eq:I-2k-d}
        I_{2^k-1, d}
        = d\vvd_d + \sum_{j = 1}^d \binom{2^k-1}{j} \vvd_{d-j}p_j(v_1, \dots, v_d).
    \end{equation}
    The graded polynomial ring $\ZZ_2[v_1, v_2, \dots]$ is isomorphic to $\Lambda^{\ZZ_2}$, the \emph{ring of symmetric functions} over $\ZZ_2$, with the isomorphism sending $v_i$ to the $i$-th {elementary symmetric function}. By identifying the two rings, we can interpret $\vvd_j$ and $p_j$ as symmetric functions and exploit some known identities. We refer the reader to~\cite{symm-func}.
    
    The expressions $\vvd_i = \vvd_i(v_1, \dots, v_i)$ satisfy the same recurrence
    \begin{equation}
        \vvd_i = \sum_{j=0}^{i-1} v_{i-j}\vvd_j
    \end{equation}
    as the complete homogeneous symmetric functions $h_i$, so we have $\vvd_i(v_1, \dots, v_i) = h_i(v_1, \dots, v_i)$ as expressions in $v_1, \dots, v_i$. Moreover, by a well-known identity involving $h_j$ and the power sum symmetric functions $p_j$ we have, for each $d \ge 0$:
    \begin{equation}\label{eq:relation-hp}
        0 = dh_d + \sum_{j=1}^d h_{d-j}p_j = d\vvd_d + \sum_{j=1}^d \vvd_{d-j}p_j. 
    \end{equation}
    The right hand sides of~(\ref{eq:I-2k-d}) and~(\ref{eq:relation-hp}) are almost the same: the binomial coefficient is always $1$, except for the case $j = d = 2^k$. Hence, by subtracting, we have
    \begin{equation}
        I_{2^k-1, d} = \begin{cases}
            0 & \text{if $d < 2^k$,} \\
            \vvd_0 p_{2^k} & \text{if $d = 2^k$.}
        \end{cases}
    \end{equation}
    Since $\vvd_0 = 1$, it remains to show that $p_{2^k} = v_1^{2^k}$.
    This is a consequence of the following identity in $\Lambda^{\ZZ_2}$ as a subring of $\ZZ_2[\![x_1, x_2, \dots]\!]$:
    \begin{equation}
        p_{2^k} = \sum_{i \ge 1} x_i^{2^k} = \bigg(\sum_{i\ge 1} x_i\bigg)^{2^k} = v_1^{2^k}.\vspace{-3ex}
    \end{equation}
\end{proof}
\section{Geodesic embeddings}\label{sec:geod}
In this section, we introduce the Kolpakov--Reid--Slavich embedding of arithmetic manifolds, which we will use repeatedly to construct non-cobordant manifolds starting from low dimensions.

Let us fix a totally real number field $k\ne \QQ$, and denote by $R$ its ring of integers; in our construction we will have $k = \QQ(\sqrt{5})$ and $R = \ZZ\!\left[\frac{\sqrt{5} + 1}{2}\right]$.

\begin{defin}
    We say that a hyperbolic manifold $M$ is \emph{good} if it is connected, arithmetic of simplest type, defined over $k$, with admissible quadratic form $f$, and such that its fundamental group $\pi_1(M) < \mathrm O(f)$ is contained in the group of $k$-points $\mathrm O(f, k)$. (Note that a good manifold is necessarily closed, since $k \ne \QQ$.)
\end{defin}

One form of the aforementioned embedding is as follows:
\begin{thm}[{\cite{embedding}, \cite[Theorem~1.2]{spinc}}]\label{thm:embed-krs}
    Let $M$ be a good hyperbolic $n$-manifold with defining form $f$. Then $M$ embeds geodesically into a good hyperbolic $(n+1)$-manifold $M'$ with defining form $f + y^2$, such that the map $i_*\colon \pi_1(M) \inj \pi_1(M')$ is given, on matrices $A \in \mathrm O(f)$, by the direct sum $i_*(A) \coloneqq A \oplus [1]$. In particular, the normal bundle of $M$ in $M'$ is trivial.
\end{thm}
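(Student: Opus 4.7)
The plan is to realize $M$ as a totally geodesic hypersurface inside an arithmetic quotient of $\HH^{n+1}$ defined by the form $f + y^2$. First I would set up the ambient geometry: identify $\HH^n \subset \HH^{n+1}$ with the hyperplane $\{y = 0\}$. The block-diagonal embedding $\iota\colon \mathrm O(f) \inj \mathrm O(f+y^2)$, $A \mapsto A \oplus [1]$, preserves this hyperplane and acts on it as $\mathrm O(f)$. Moreover, $\iota(\mathrm O(f))$ is centralized by the reflection $\sigma := \diag(I_n, -1) \in \mathrm O(f+y^2, k)$, whose existence will later yield the triviality of the normal bundle.

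Next I would construct an ambient arithmetic lattice. Since $M$ is good, $\Gamma := \pi_1(M)$ sits inside $\mathrm O(f, k)$, and the entries of a finite generating set share a bounded denominator in $R$, so $\iota(\Gamma)$ is contained in some arithmetic lattice $\Lambda_0 \leq \mathrm O(f+y^2, R)$ of simplest type defined over $k$. By Selberg's lemma, a sufficiently deep principal congruence subgroup $\Lambda \leq \Lambda_0$ is torsion-free and still contains $\iota(\Gamma)$; the quotient $\HH^{n+1}/\Lambda$ is then a candidate closed (since $k \ne \QQ$) good hyperbolic $(n+1)$-manifold.

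The hard step is arranging that the projection $\iota(\Gamma) \backslash \HH^n \to \HH^{n+1}/\Lambda$ is an isometric embedding onto a totally geodesic copy of $M$, rather than a finite quotient of $M$ or a self-intersecting immersion. This imposes two separability conditions on $\Lambda$: (i) the $\Lambda$-stabilizer of $\HH^n$ must equal $\iota(\Gamma)$ exactly, so the image is not a proper intermediate cover; (ii) any two $\Lambda$-translates of $\HH^n$ must be either equal or disjoint, so there is no self-intersection. Both follow from subgroup separability of geometrically finite subgroups in arithmetic hyperbolic lattices of simplest type, which lets one refine $\Lambda$ to a finite-index subgroup $\Lambda'$ satisfying (i) and (ii); this is the main arithmetic input behind the Kolpakov--Reid--Slavich construction. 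Setting $M' := \HH^{n+1}/\Lambda'$ then produces a good hyperbolic $(n+1)$-manifold in which $M$ embeds geodesically with $i_* = \iota$.

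Finally, triviality of the normal bundle would follow from the presence of $\sigma$: after shrinking $\Lambda'$ once more to normalize it by $\sigma$ (or simply by working in the subgroup generated by $\Lambda'$ and $\sigma$), the involution $\sigma$ descends to an isometric reflection of a tubular neighborhood of $M$ in $M'$, so $\nu M$ admits a nonvanishing normal section and is therefore trivial. Even without this normalization, the normal line bundle is classified by a homomorphism $\pi_1(M) \to \ZZ_2$ which, under $\iota$, records the sign of the action on the $y$-direction and thus vanishes identically.
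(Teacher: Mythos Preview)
The paper does not actually prove this theorem; it is quoted with citations to \cite{embedding} and \cite[Theorem~1.2]{spinc}, and is used as a black box (the only argument given in the paper is the subsequent Proposition~\ref{prop:embed-krs-inv}, which takes Theorem~\ref{thm:embed-krs} for granted and adds the involution). Your sketch correctly reconstructs the architecture of the cited Kolpakov--Reid--Slavich argument: realize $\HH^n$ as the hyperplane $\{y=0\}$ in the quadric model for $f+y^2$, embed $\Gamma$ block-diagonally, build an ambient arithmetic lattice, and then use subgroup separability of geometrically finite subgroups in simplest-type arithmetic lattices to upgrade the totally geodesic immersion to an embedding with the correct stabilizer. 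Your second justification of normal triviality---reading the orientation character of $\nu M$ as the sign of the $(n{+}2,n{+}2)$-entry, which is identically $+1$ under $\iota$---is the cleanest argument and matches how the paper later exploits the explicit matrix description.

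One genuine slip: the claim that ``a sufficiently deep principal congruence subgroup $\Lambda \le \Lambda_0$ is torsion-free and \emph{still contains} $\iota(\Gamma)$'' is false as stated. Congruence subgroups shrink as the level deepens, so there is no reason $\iota(\Gamma)$ should survive. In the actual construction one does not first pass to a congruence subgroup; rather, torsion-freeness is obtained in the same separability step as your conditions (i) and (ii), by separating $\iota(\Gamma)$ simultaneously from representatives of the finitely many torsion conjugacy classes and from the finitely many lattice elements causing self-intersections. This is a routine repair once separability is available, so your overall strategy is sound. (A minor dimension count: since $f$ is an $(n{+}1)$-ary form, your reflection should be $\sigma=\diag(I_{n+1},-1)$, not $\diag(I_n,-1)$.)
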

Next, we shall extend this result to hyperbolic manifolds with involutions.
\begin{defin}
    Let $M$ be a hyperbolic manifold equipped with an involution $\tau$. We say that $(M, \tau)$ is \emph{good} if $M$ is good and $\tau$ is a nontrivial isometry \emph{defined over $k$}, that is, represented by an element of $\mathrm{O}(f, k)$.    
\end{defin}

\begin{prop}\label{prop:embed-krs-inv}
    Let $(M, \tau)$ be a good hyperbolic $n$-manifold with involution. Then $M$ embeds geodesically into a good hyperbolic $(n+1)$-manifold with involution $(M', \tau')$, such that:
    \begin{enumerate}[label=(\arabic*)]
        \item $M \in F(\tau')$;
        \item $\nu_{M'}(M)$ is trivial;
        \item $\tau'$ acts on $\nu_{M'}(M)$ by negation;
        \item $\tau$ extends to an involution $\sigma$ of $M'$ defined over $k$.
    \end{enumerate}
\end{prop}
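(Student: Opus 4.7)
The plan is to build $M'$ as a finite-index cover of the manifold produced by Theorem~\ref{thm:embed-krs}, chosen so that two explicit elements of $\mathrm{O}(f+y^2, k)$ descend to involutions realizing $\tau'$ and $\sigma$. First I would apply Theorem~\ref{thm:embed-krs} directly to $M$, obtaining a good hyperbolic $(n+1)$-manifold $M'_0$ with defining form $f' := f + y^2$, in which $M$ embeds geodesically as the quotient of the hyperplane $H := \{y = 0\}$. Write $\Gamma := \pi_1(M)$ and $\Gamma'_0 := \pi_1(M'_0) \subset \mathrm{O}(f', k)$; by the theorem, the inclusion $\Gamma \inj \Gamma'_0$ is $A \mapsto A \oplus [1]$, and its image coincides with the setwise stabilizer of $H$ in $\Gamma'_0$.

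Next I would introduce in $\mathrm{O}(f', k)$ the two commuting involutions
\begin{equation*}
    \rho := \diag(1, \dots, 1, -1) \qquad \text{and} \qquad \tilde\sigma := T \oplus [1],
\end{equation*}
where $T \in \mathrm{O}(f, k)$ represents $\tau$. Both are defined over $k$, and together they generate a Klein four group $K \le \mathrm{O}(f', k)$. The subgroup $\Gamma \oplus [1]$ is $K$-invariant: conjugation by $\rho$ is trivial on elements of the form $A \oplus [1]$, while $\tilde\sigma (A \oplus [1]) \tilde\sigma^{-1} = (TAT^{-1}) \oplus [1]$ lies in $\Gamma \oplus [1]$ since $T\Gamma T^{-1} = \Gamma$ (because $\tau$ is an isometry of $M$).

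The key step is to replace $\Gamma'_0$ by a finite-index subgroup normalized by $K$. Since $\Gamma'_0$ is arithmetic of simplest type defined over $k$, every element of $\mathrm{O}(f', k)$ lies in its commensurator; in particular
\begin{equation*}
    \Gamma' := \bigcap_{g \in K} g\,\Gamma'_0\,g^{-1}
\end{equation*}
is of finite index in $\Gamma'_0$ and is conjugation-invariant under $K$ by a reindexing argument. Moreover, because $\Gamma \oplus [1]$ is itself $K$-invariant and contained in $\Gamma'_0$, it is contained in every conjugate $g\,\Gamma'_0\,g^{-1}$, hence in $\Gamma'$. Setting $M' := \mathbb{H}^{n+1}/\Gamma'$, the quotient is a good hyperbolic $(n+1)$-manifold (as a finite cover of $M'_0$), and $\rho, \tilde\sigma$ descend to involutions $\tau', \sigma$ of $M'$, both defined over $k$.

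It remains to verify (1)--(4). The stabilizer of $H$ in $\Gamma'$ satisfies
\begin{equation*}
    \mathrm{Stab}_{\Gamma'}(H) = \Gamma' \cap \mathrm{Stab}_{\Gamma'_0}(H) = \Gamma' \cap (\Gamma \oplus [1]) = \Gamma \oplus [1],
\end{equation*}
so the image of $H$ is still a geodesically embedded copy of $M$ in $M'$. The normal bundle of $H$ in $\mathbb{H}^{n+1}$ is the trivial line bundle along the $y$-axis, and since $\Gamma \oplus [1]$ acts trivially on it, $\nu_{M'}(M)$ is trivial, giving (2). The involution $\tau'$ is induced by $\rho$, which fixes $H$ pointwise and acts as $-1$ on the $y$-axis, yielding (1) and (3). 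Finally, $\sigma$ is induced by $\tilde\sigma$, whose restriction to $H$ acts as $T$, so $\sigma|_M = \tau$ and (4) holds. The main obstacle lies in the commensurator step: one must ensure that passing to $\Gamma'$ does not disrupt the clean geodesic embedding of $M$, which ultimately reduces to the $K$-invariance of $\Gamma \oplus [1]$ arranged by the very choice of $\rho$ and $\tilde\sigma$.
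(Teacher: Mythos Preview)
Your proof is correct and follows essentially the same strategy as the paper: apply Theorem~\ref{thm:embed-krs}, introduce the two commuting involutions $\rho = \diag(1,\dots,1,-1)$ and $\tilde\sigma = T \oplus [1]$ (the paper's $T'$ and $S$), and pass to the intersection of the $K$-conjugates of $\pi_1(M'_0)$, using the commensurator property of $\mathrm{O}(f',k)$ (the paper's Lemma~\ref{lemma:good-inv}) to ensure finite index. Your verification of (1)--(4), including the explicit check that $\mathrm{Stab}_{\Gamma'}(H) = \Gamma \oplus [1]$, is slightly more detailed than the paper's but otherwise identical in content.
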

The proof requires a technical lemma:
\begin{lemma}\label{lemma:good-inv}
    Let $\Gamma$ be a subgroup of $\mathrm O(f, k)$ which is arithmetic of simplest type, i.e., commensurable with $\mathrm O(f, R)$, and let $P \in \mathrm O(f, k)$. Then $\Gamma$ and $P^{-1} \Gamma P$ are commensurable.
\end{lemma}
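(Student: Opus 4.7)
Since commensurability is an equivalence relation and is preserved by conjugation (because $P^{-1}(A \cap B)P = P^{-1}AP \cap P^{-1}BP$), it suffices to prove the statement for $\Gamma = \mathrm{O}(f, R)$ itself: reduce to showing that $\mathrm{O}(f, R)$ and $P^{-1}\mathrm{O}(f, R) P$ are commensurable. The standard approach is to exhibit a principal congruence subgroup of $\mathrm{O}(f, R)$ that is simultaneously contained in $P^{-1}\mathrm{O}(f, R) P$.

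First, I would clear the denominators of $P$. Since $k = \mathrm{Frac}(R)$ and $P \in \mathrm{O}(f, k)$, and since $P^{-1} = J^{-1} P^{T} J \in \mathrm{O}(f, k)$ (with $J$ the Gram matrix of $f$), there exist nonzero $a, b \in R$ with $aP \in M_{n+1}(R)$ and $bP^{-1} \in M_{n+1}(R)$. Set $I \coloneqq (ab)R \subseteq R$, which is a nonzero ideal, so $R/I$ is finite and the principal congruence subgroup
\begin{equation*}
    \Gamma(I) \coloneqq \ker\bigl(\mathrm{O}(f, R) \to \mathrm{O}(f, R/I)\bigr)
\end{equation*}
has finite index in $\mathrm{O}(f, R)$.

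Next I would check that $P\, \Gamma(I)\, P^{-1} \subseteq \mathrm{O}(f, R)$. Any $g \in \Gamma(I)$ can be written as $g = \mathrm{Id} + h$ with $h \in M_{n+1}(I)$, and then
\begin{equation*}
    PgP^{-1} = \mathrm{Id} + \tfrac{1}{ab}(aP)\, h\, (bP^{-1}).
\end{equation*}
Since $aP$ and $bP^{-1}$ lie in $M_{n+1}(R)$ and the entries of $h$ lie in $(ab)R$, the product $(aP)h(bP^{-1})$ has entries in $(ab)R$, so $PgP^{-1}$ has entries in $R$. Because it also preserves $f$, we get $PgP^{-1} \in \mathrm{O}(f, R)$. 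Hence $\Gamma(I) \subseteq \mathrm{O}(f, R) \cap P^{-1}\mathrm{O}(f, R) P$, showing this intersection has finite index in $\mathrm{O}(f, R)$.

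Finally, applying the same argument with $P^{-1}$ in place of $P$ yields a nonzero ideal $J$ with $P^{-1}\Gamma(J) P \subseteq \mathrm{O}(f, R)$, so that $\mathrm{O}(f, R) \cap P^{-1} \mathrm{O}(f, R) P$ contains $P^{-1}\Gamma(J) P$, a subgroup of finite index in $P^{-1}\mathrm{O}(f, R) P$. Thus $\mathrm{O}(f, R)$ and $P^{-1}\mathrm{O}(f, R) P$ are commensurable, and the general statement follows from the reduction above. There is no real obstacle here; the only point requiring care is keeping track of the denominators of $P$ and $P^{-1}$ simultaneously, which is what forces the choice of ideal $I = (ab)R$.
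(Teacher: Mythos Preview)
Your proposal is correct and follows essentially the same route as the paper: reduce to $\Gamma = \mathrm{O}(f,R)$, clear denominators of $P$ and $P^{-1}$, and show that the principal congruence subgroup at level $ab$ is conjugated into $\mathrm{O}(f,R)$. The only cosmetic differences are that the paper takes $a,b \in \ZZ$ rather than $R$, and that you spell out both directions of commensurability explicitly whereas the paper leaves the symmetric argument implicit.
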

\begin{proof}
    It suffices to show that $\mathrm O(f, R)$ is commensurable with $P^{-1} \mathrm O(f, R)P$. Let $P = A/a$, $P^{-1} = B/b$, with $A, B \in \mathrm M(n, R)$ and $a, b \in \ZZ$. Consider the congruence subgroup $\Gamma(ab) < \mathrm O(f, R)$ consisting of $f$-orthogonal matrices of the form $I + abM$, which is of finite index in $\mathrm O(f, R)$. We shall show that $P$ conjugates it into a subgroup of $\mathrm O(f, R)$.

    Indeed, given $Q = I + abM \in \Gamma(ab)$, we have
    \begin{equation}
        P^{-1} Q P = I + P^{-1}abMP = I + AMB \in \mathrm M(n, R).
    \end{equation}
    Since $P^{-1} Q P \in \mathrm O(f, k)$, the claim follows.
\end{proof}

\begin{proof}[Proof of Proposition~\ref{prop:embed-krs-inv}]
We start by embedding $M$ into a good manifold $M'$ with Theorem~\ref{thm:embed-krs}. Let $T \in \mathrm O(f, k)$ represent the involution $\tau$ and let $S \coloneqq T \oplus [1] \in \mathrm O(f+y^2, k)$. Finally, define $T' \coloneqq \diag(1, \dots, 1, -1) \in \mathrm O(f+y^2, k)$. 

Note that $T'$ and $S$ commute, and that they both normalize $\pi_1(M)$. Without loss of generality, we can assume that $\pi_1(M')$ is normalized by $H \coloneqq \langle T', S\rangle$, by replacing it with the intersection of all its $H$-conjugates. The latter contains $\pi_1(M)$, and is of finite index in $\pi_1(M')$ by Lemma~\ref{lemma:good-inv}. Hence, $T'$ and $S$ define involutions $\tau'$ and $\sigma$ on $M'$: the former fixes $M$ and reverses its normal bundle, the latter extends $\tau$.
\end{proof}

\subsection{Twisting the embedding}
Let $(M, \tau) \inj (M', \tau')$ be an embedding of good manifolds with involutions, as constructed in Proposition~\ref{prop:embed-krs-inv}. There is a natural way to modify $(M', \tau')$ by a cut-and-paste operation along $M$.

\begin{defin}
    Given an embedding $(M, \tau) \inj (M', \tau')$ as in Proposition~\ref{prop:embed-krs-inv}, we define the \emph{twist} $T(M', \tau', M, \tau) \coloneqq (M'', \tau'')$, where $M''$ is obtained by cutting $M'$ along $M$ and re-gluing the boundary components with the isometry $\tau$. The involution $\tau''$ is defined to agree with $\tau'$ on $M''\setminus M \simeq M'\setminus M$, and with $\tau$ on $M$.
\end{defin}
It is not hard to check that
\begin{equation}\label{eq:fixed-twist}
    F(\tau'') = F(\tau) \sqcup F(\tau') \setminus \{M\}.
\end{equation}
As for the normal bundles, we have
\begin{equation}\label{eq:fixed-twist-normal}
    \nu_{M''}(X) \simeq
    \begin{cases}
        \nu_M(X)\oplus \varepsilon^1 & \text{if $X \in F(\tau)$}, \\
        \nu_{M'}(X) & \text{if $X \in F(\tau') \setminus \{M\}$}.
    \end{cases}
\end{equation} 
This construction lets us control the value of the formula of Theorem~\ref{thm:phi-n-sum-Ind} while recursively applying Proposition~\ref{prop:embed-krs-inv}. In order to do so, we show that the twist is arithmetically well behaved.
\begin{lemma}
    The manifold $T(M', \tau', M, \tau) \coloneqq (M'', \tau'')$ is a good hyperbolic manifold with involution.
\end{lemma}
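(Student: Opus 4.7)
The plan is to verify in turn the defining properties of a good manifold with involution for $(M'', \tau'')$: (i) $M''$ is a closed connected hyperbolic manifold; (ii) $\pi_1(M'') \subset \mathrm O(f+y^2,k)$; (iii) $\pi_1(M'')$ is commensurable with $\mathrm O(f+y^2,R)$; and (iv) $\tau''$ is a nontrivial involution represented by an element of $\mathrm O(f+y^2,k)$. Property (i) is essentially automatic: because $\tau$ is a hyperbolic isometry of the totally geodesic hypersurface $M$, the metrics on the pieces of $M'\setminus M$ glue smoothly across $M$ in $M''$, producing a hyperbolic metric; connectedness follows by inspecting the two cases where $M$ does or does not separate $M'$, since in both the resulting $M''$ consists of the same pieces reglued via an isometry of the common boundary $M$.

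For (ii) and (iii), I would describe $\pi_1(M'')$ explicitly inside $\mathrm O(f+y^2,k)$. Lift $M$ to a hyperplane $H\subset\mathbb H^{n+1}$; its stabilizer in $\pi_1(M')$ is $\Delta\simeq\pi_1(M)$, and $\sigma$ is represented by $S = T\oplus[1]\in\mathrm O(f+y^2,k)$, which fixes $H$ setwise and whose conjugation action on $\Delta$ realizes $\tau_*$. By Seifert--van Kampen (in the separating case $\pi_1(M') \cong \Gamma_1 *_\Delta \Gamma_2$; the non-separating case is analogous with an HNN extension), the twist by $\tau$ gives
\begin{equation}
    \pi_1(M'') \cong \Gamma_1 *_{\tau_*} \Gamma_2,
\end{equation}
which, once the $\Gamma_2$-factor is conjugated by $S$ so that the two copies of $\Delta$ match as matrices, is realized as the subgroup $\langle \Gamma_1, S^{-1}\Gamma_2 S\rangle\subset\mathrm O(f+y^2,k)$. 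This gives (ii). For (iii), set $\Gamma_0 := \pi_1(M')\cap\mathrm O(f+y^2,R)\cap S^{-1}\mathrm O(f+y^2,R)S$, which has finite index in $\pi_1(M')$ by Lemma~\ref{lemma:good-inv}; on the corresponding finite cover of $M'$, both the relevant pieces of $\Gamma_0$ and their $S$-conjugates lie inside $\mathrm O(f+y^2,R)$, so the fundamental group of the corresponding finite cover of $M''$ sits inside $\mathrm O(f+y^2,R)$. Since this cover is compact, its fundamental group is cocompact, hence of finite index in the arithmetic lattice $\mathrm O(f+y^2,R)$, giving the required commensurability.

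For (iv), a direct local analysis in a tubular neighborhood $M\times(-\epsilon,\epsilon)$ of $M$ in $M''$ shows that $\tau''$ has the form $(x,t)\mapsto (\tau(x),-t)$: combining $\tau'(x,t)=(x,-t)$ with the $\tau$-twist in the gluing identifies $(x,0)_+$ with $(\tau(x),0)_-$, forcing this formula on both sides. It is therefore represented on the universal cover by $T\oplus[-1] = ST' \in\mathrm O(f+y^2,k)$, an involution since $S$ and $T'$ commute. Because both $S$ and $T'$ normalize the generators of $\pi_1(M'')$ identified above, so does $ST'$, so $\tau''$ is a well-defined isometry of $M''$ defined over $k$, and is nontrivial since $T'\neq I$.

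The main obstacle will be the bookkeeping in steps (ii)--(iii): one must verify carefully that the cut-and-paste along $M$ via $\tau$ really corresponds, in the universal cover, to conjugating the holonomy on one side of $H$ by $S$, handling the separating and non-separating cases uniformly, and then check that the congruence-like subgroup $\Gamma_0$ is preserved well enough by the construction to descend to a finite cover of $M''$ with fundamental group in $\mathrm O(f+y^2,R)$.
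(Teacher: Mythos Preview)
Your plan is correct in outline and would go through with enough care, but it takes a noticeably more laborious route than the paper.  The paper never computes $\pi_1(M'')$ at all.  In the separating case it simply observes that $M''$ is \emph{isometric} to $M'$: the map that is the identity on one half and $\sigma$ on the other intertwines the two gluings, and under this isometry $\tau''$ pulls back to $\sigma\tau'$ --- exactly your element $ST'$.  Goodness is then immediate.  In the non-separating case the paper exhibits a common double cover: the cover $N\to M'$ associated to the mod-$2$ intersection homomorphism $\lambda\colon\pi_1(M')\to\ZZ_2$ has deck transformation $s$, $\sigma$ lifts to an involution $\sigma'$ of $N$ defined over $k$, and then $M'' \simeq N/\langle s\sigma'\rangle$.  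Commensurability of $M''$ with $M'$ follows for free, and $\sigma'$ descends to $\tau''$, showing it is defined over $k$.

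Your Seifert--van Kampen/HNN approach buys an explicit matrix description of $\pi_1(M'')$, but in the separating case it actually collapses to the paper's observation: since $\sigma$ preserves each half, $S$ normalizes both $\Gamma_1$ and $\Gamma_2$, so your group $\langle \Gamma_1, S^{-1}\Gamma_2 S\rangle$ literally \emph{equals} $\pi_1(M')$.  The genuine work in your scheme is the non-separating HNN case, which you only gesture at; there the paper's double-cover trick is considerably cleaner than tracking the stable letter through a conjugation by $S$ and then producing the ``corresponding finite cover of $M''$'' in step~(iii), whose existence you have not yet justified (you would need the preimage of $M$ in the $\Gamma_0$-cover, and the lift of $\tau$ to it, to behave well).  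Your local computation in~(iv) identifying $\tau''$ with $ST'=T\oplus[-1]$ is correct and matches the paper.
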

\begin{proof}
    We distinguish two cases. If $M$ separates $M'$, then $M''$ is isometric to $M'$. Up to this isometry, $\tau'' = \sigma\tau'$, where $\sigma$ extends $\tau$. Hence, we only need to show that $\tau''$ is defined over $k$, which is true as $\sigma$ and $\tau'$ are.

    If $M$ does not separate $M'$, then it defines a map $\lambda \colon \pi_1(M') \surj \ZZ_2$ by counting intersections modulo $2$. There is a double cover $N \surj M'$ corresponding to $\ker \lambda$, with deck automorphism $s$. Then $\sigma$ lifts to an involution $\sigma'$ of $N$ defined over $k$, such that $N/\langle s\sigma'\rangle \simeq M''$ (see Figure~\ref{fig:twist}). Therefore, $M''$ and $M'$ are commensurable. Moreover, $\sigma'$ is a lift of the involution $\tau''$ of $M''$, which is hence defined over $k$. It follows that $(M'', \tau'')$ is good.
\end{proof}
\begin{figure}[h]
    \centering
    \begin{tikzpicture}
        \node (fig1) at (0,0) {\includegraphics[width=0.45\textwidth]{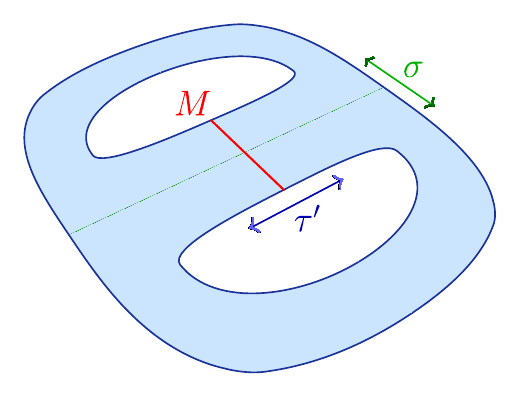}};
        \node (fig2) at (7,0) {\includegraphics[width=0.45\textwidth]{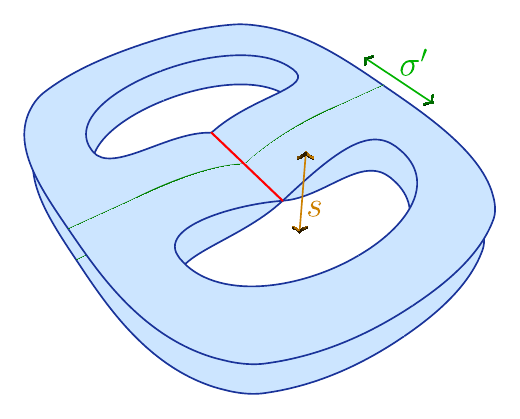}};
        \node (fig3) at (3.5,-5) {\includegraphics[width=0.45\textwidth]{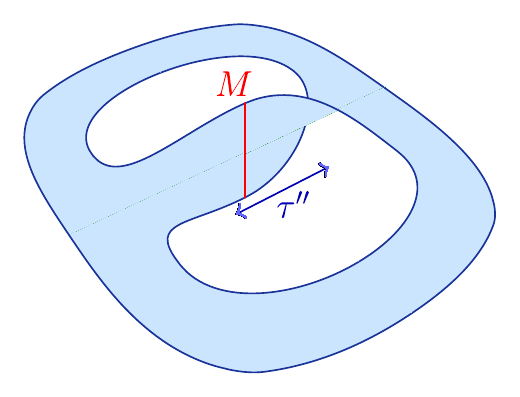}};
        
        \draw[->, thick, bend left=30] (fig1.60) to (fig2.120);
        
        \draw[->, thick, bend left=30] ([shift={(0.3,0.2)}]fig2.300) to ([shift={(0.3,0.2)}]fig3.0);
    \end{tikzpicture}
    \caption{Construction of the twist $T(M', \tau', M, \tau)$ when $M$ does not separate $M'$. Clockwise from top left: the manifold $M$ embedded in $M'$; the double cover $N$ with deck automorphism $s$; the twisted quotient $M''$.}
    \label{fig:twist}
\end{figure}
\section{Non-cobordant hyperbolic manifolds}\label{sec:non-cobord}
In this section we show how to construct a hyperbolic $n$-manifold $M^n$ with $\varphi^n[M^n] = 1$, which is therefore non-cobordant, by starting from a simpler manifold and recursively passing to a (possibly twisted) embedding. We then proceed to apply the method for many values of $n$.

\begin{thm}\label{thm:main}
    Let $(M, \tau)$ be a good hyperbolic $k$-manifold with involution, where $k \le n$.
    Suppose that
    \begin{equation}
        I(n, M, \tau) \coloneqq \sum_{d = 0}^{n-1} \sum_{X \in F_d(\tau)} I_{n,d}(w_i(\nu_M X)) = 1.
    \end{equation}
    Then there exists a good hyperbolic $n$-manifold with involution $(\Bar M, \Bar \tau)$ such that $\varphi^n[\Bar M] = 1$. In particular, $\Bar M$ is non-cobordant.
\end{thm}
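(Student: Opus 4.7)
The plan is to induct on the codimension $n - k$. The base case $k = n$ is immediate: Theorem~\ref{thm:phi-n-sum-Ind} gives $\varphi^n[M] = I(n, M, \tau) = 1$, so we may take $(\bar M, \bar \tau) \coloneqq (M, \tau)$.

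For the inductive step, assume $k < n$ and that $(M, \tau)$ is a good hyperbolic $k$-manifold with involution satisfying $I(n, M, \tau) = 1$. The plan is to produce a good $(k+1)$-manifold with involution carrying the same invariant, and then iterate. First I apply Proposition~\ref{prop:embed-krs-inv} to obtain a good geodesic embedding $(M, \tau) \inj (M', \tau')$, and at the same time I form the twisted pair $(M'', \tau'') \coloneqq T(M', \tau', M, \tau)$ from the previous subsection; by the lemma of that subsection, $(M'', \tau'')$ is also good. The key claim is the additive identity
\begin{equation*}
    I(n, M', \tau') + I(n, M'', \tau'') = I(n, M, \tau) \quad \text{in } \ZZ_2.
\end{equation*}

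To verify it, I expand $I(n, M'', \tau'')$ using~(\ref{eq:fixed-twist}) and~(\ref{eq:fixed-twist-normal}): the fixed set of $\tau''$ decomposes as $F(\tau) \sqcup (F(\tau') \setminus \{M\})$, and on each piece the normal bundle either gains a trivial line summand or remains unchanged, so that the Stiefel--Whitney classes entering each $I_{n, \dim X}$ are unaffected. Collecting terms gives
\begin{equation*}
    I(n, M'', \tau'') = I(n, M, \tau) + I(n, M', \tau') - I_{n, k}\bigl(w_1(\nu_{M'} M), \dots, w_k(\nu_{M'} M)\bigr).
\end{equation*}
The correction term vanishes: by Proposition~\ref{prop:embed-krs-inv}(2) the bundle $\nu_{M'}M$ is trivial, and $I_{n, k}(0, \dots, 0) = 0$ for $k \ge 1$ because both $\vvd_k$ and each power sum $p_j$ with $j \ge 1$ have zero constant term in their formula~(\ref{eq:defin-Ind}). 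Since a hyperbolic manifold automatically satisfies $k \ge 2 \ge 1$, this observation always applies.

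The additive identity together with $I(n, M, \tau) = 1$ forces exactly one of the values $I(n, M', \tau')$ and $I(n, M'', \tau'')$ to equal $1$: I select the corresponding manifold and repeat the procedure, one dimension at a time, until reaching dimension $n$; the final invariant agrees with $\varphi^n[\bar M]$ by Theorem~\ref{thm:phi-n-sum-Ind}. I do not expect a genuine obstacle in this argument, because the arithmetic and geometric substance has already been absorbed into Proposition~\ref{prop:embed-krs-inv} and the twist lemma. What remains is a short bookkeeping computation, whose point is that the binary choice (twist or not) at each inductive step furnishes exactly the single bit of freedom needed to preserve $I(n, \cdot, \cdot) = 1$ under each increase in dimension.
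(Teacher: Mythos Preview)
Your argument is correct and follows essentially the same route as the paper: induct on the dimension, at each step apply Proposition~\ref{prop:embed-krs-inv} and use the twist to flip the bit if necessary, with the correction term $I_{n,k}(w_i(\nu_{M'}M))$ vanishing by triviality of the normal bundle and positive degree of $I_{n,k}$. The only cosmetic difference is that the paper phrases the dichotomy as a case split rather than your additive identity, and justifies $k\ge 1$ by noting that $I(n,M,\tau)$ would vanish for $k=0$ rather than by appealing to hyperbolicity.
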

\begin{proof}
    We prove by induction that, for $k \le m \le n$, there exists a good manifold with involution $(M^m, \tau^m)$ such that
    \begin{equation}\label{eq:proof-induction}
        I(n, M^m, \tau^m) = \sum_{d = 0}^{n-1} \sum_{X \in F_d(\tau^m)} I_{n,d}(w_i(\nu_{M^m} X)) = 1.
    \end{equation}
    (Note that all terms with $d \ge m$ are zero, as $F_d(\tau^m)$ is then empty.) The base case $m = k$ is true by hypothesis. Now, suppose that~(\ref{eq:proof-induction}) holds for $(M^m, \tau^m)$. By Proposition~\ref{prop:embed-krs-inv}, we can embed $M^m$ into a manifold with involution $(M', \tau')$. If $I(n, M', \tau') = 1$, we are done by choosing $(M^{m+1}, \tau^{m+1}) \coloneqq (M', \tau')$.

    Otherwise, let $(M^{m+1}, \tau^{m+1}) \coloneqq T(M', \tau', M^m, \tau^m)$. Then, by~(\ref{eq:fixed-twist}) and~(\ref{eq:fixed-twist-normal}), we have:
    \begin{align}
        I(n, M^{m+1}, \tau^{m+1})
        &= I(n, M', \tau') + I(n, M^m, \tau^m) + I_{n,m}(w_i(\nu_{M'} M^m)) \\
        &= I(n, M^m, \tau^m) = 1,
    \end{align}
    where $I_{n,m}(w_i(\nu_{M'} M^m)) = 0$, since $\nu_{M'} M^m$ is trivial and $I_{n,m}$ is homogeneous of degree $m > 0$. (The case $m = k = 0$ can be excluded, as the sum in $I(n,M,\tau)$ would vanish.)

    When $m = n$, we finally obtain a manifold with involution $(\Bar M, \Bar \tau) \coloneqq (M^n, \tau^n)$, such that $\varphi^n [\Bar M] = I(n, \Bar M, \Bar \tau) = 1$.
\end{proof}
\subsection{A starting manifold}

Let $D$ be the first small cover of the hyperbolic right-angled dodecahedron in the left column of~\cite[Table~1]{garrison-scott}. Let us number the facets of the dodecahedron $1$ to $12$, as in~\cite[Figure~3]{garrison-scott}. The reflection in facet $i$ induces an isometry $\tau_i$ of $D$, with $F(\tau_i)$ consisting of isolated points, circles, and surfaces of characteristic $-1$ diffeomorphic to $\mathbb T^2 \# \mathbb{RP}^2$; using SageMath, we classify these components in Table~\ref{tab:dod-small-cover}.

It is also well known that $D$ covers the arithmetic Coxeter simplex $[5,3,4]$, so it is good for $k = \QQ(\sqrt{5})$, and that the isometries $\tau_i$ are conjugate to a generator of the Coxeter group, so they are defined over $k$.

\begin{table}[ht]
    \centering
    \begin{tabular}{cccccc}
        \toprule
        & & \multicolumn{4}{c}{Number of fixed submanifolds of $\tau_i$}
        \\ \cmidrule{3-6}
        Facet $i$ & Color & Points & Circles & Surfaces $X$ & ...with $w_1^2(\nu X) = 1$ \\ \midrule
         $1, 7, 10$ & $(0,0,1)$ & $3$ & $1$ & $3$ & $1$ \\ 
         $2, 5$     & $(0,1,0)$ & $2$ & $4$ & $2$ & $1$ \\ 
         $11$       & $(0,1,1)$ & $5$ & $5$ & $1$ & $0$ \\ 
         $3, 4$     & $(1,0,0)$ & $2$ & $4$ & $2$ & $2$ \\ 
         $12$       & $(1,0,1)$ & $1$ & $7$ & $1$ & $1$ \\ 
         $6$        & $(1,1,0)$ & $1$ & $7$ & $1$ & $0$ \\ 
         $8,9$      & $(1,1,1)$ & $6$ & $2$ & $2$ & $1$ \\
         \bottomrule 
    \end{tabular}
    \caption{Fixed submanifolds for each isometry $\tau_i$ of $D$.}
    \label{tab:dod-small-cover}
\end{table}

By comparing Tables~\ref{tab:Ind} and~\ref{tab:dod-small-cover} we can compute $I(n, D, \tau_i)$ for arbitrary $i = 1, \dots, 12$ and $n \ge 4$. As an example, let $n = 4m\ge 4$. By periodicity (Remark~\ref{rem:periodic}), we have
\begin{equation}
    I_{n,0} = 1,\quad I_{n,1} = 0,\quad I_{n,2} = v_1^2 + v_2.
\end{equation}
The involution $\tau_{11}$ has $5$ isolated fixed points and $0$ fixed surfaces with nontrivial $v_1^2$. Hence, $I(4m, D, \tau_{11}) = 1$ for all $m \ge 1$. Similarly, we can check that $I(4m+1, D, \tau_{1}) = I(4m+2, D, \tau_{1}) = 1$ for all $m \ge 1$.
Hence, we have proved Theorem~\ref{thm:main20}, which we restate here: 
\begin{thm20*} 
    For each $n \ge 4$, $n \not \equiv 3 \pmod 4$, there exists a connected, non-cobordant closed hyperbolic $n$-manifold.
\end{thm20*}

\subsection{The remaining dimensions}\label{sec:higher}
Note that the remaining case $n \equiv 3 \pmod 4$ cannot be handled by starting with a $3$-manifold, since by Proposition~\ref{prop:initial-zeros}, we have $I_{n,0} = I_{n,1} = I_{n,2} = 0$ in that case.

Let us outline a possible general approach to the construction of non-cobordant hyperbolic manifolds in every dimension $n \ne 2^i-1$, using Proposition~\ref{prop:initial-zeros}. Since even dimensions are included in Theorem~\ref{thm:main20}, assume that $n \ne 2^i-1$ is odd; then we can write $n = 2^k(2m+1)-1$ for $k, m \ge 1$. By Proposition~\ref{prop:initial-zeros}, the first nonzero term in $I(n, M, \tau)$ is $I_{n,2^k} = \vv_1^{2^k}$. Because of this, we would have to start at the very least from a manifold with involution $(M_{(k)}, \tau_{(k)})$ of dimension $2^k+1$, such that
\begin{equation}
    \sum_{X \in F_{2^k}(\tau_{(k)})} w_1^{2^k}(\nu X) = 1.
\end{equation}
Given such a manifold, by Proposition~\ref{prop:initial-zeros} and Theorem~\ref{thm:main}, we would have a connected, non-cobordant closed hyperbolic $n$-manifold.

Note that the case $k = 1$ corresponds to $n \equiv 1 \pmod 4$ and to the manifold with involution $(D, \tau_1)$ described above. In general, the cases $k = 1,2,3,4, \dots$ correspond to the sets $\{4m + 1\},\{8m + 3\}, \{16m + 7\}, \{32m + 15\}, \dots (m\ge 1)$, which partition the odd numbers not of the form $2^i-1$.

As a consequence, finding manifolds with involutions $(M_{(1)}, \tau_{(1)}), \dots, (M_{(k)}, \tau_{(k)})$ as above implies the existence of connected, non-cobordant closed hyperbolic $n$-manifolds for all $n \not \equiv -1 \pmod {2^{k+1}}$, $n \not \in \{1, 3, 7, \dots, 2^k-1\}$: a set of dimensions of asymptotic density $1-2^{-k-1}$.

\subsection{Even dimensions}\label{sec:even}
Recall that, for a manifold of dimension $n$, the Stiefel--Whitney number $w_n$ is the Euler characteristic modulo $2$. Hence, in any even dimension $n = 2k$ there is an alternate method for the construction of non-cobordant hyperbolic manifolds: searching for manifolds of odd characteristic. In the literature, the only such closed manifolds we could find are connected sums $\Sigma_g \# \RP^2$ for $g \ge 1$ (where $\Sigma_g$ denotes an orientable genus $g$ surface), of characteristic $1-2g$, and two $4$-manifolds of Euler characteristic $17$, constructed by Ratcliffe and Tschantz~\cite[9]{rt-instantons} by gluing two copies of the right-angled hyperbolic $120$-cell.

On the other hand, we can always ensure that the non-cobordant manifolds from our method have even characteristic, by manipulating the Kolpakov--Reid--Slavich embedding.  Indeed, since the twist operation preserves the Euler characteristic, it suffices to arrange for the embedding $(M, \tau) \inj (M', \tau')$ of Proposition~\ref{prop:embed-krs-inv} to satisfy $\chi(M') \equiv 0 \pmod 2$.

If $M$ separates $M'$, then the two halves of $M'$ are isometric via $\tau'$. It follows that $M'$ is a double, so $\chi(M')$ is even. If instead $M$ does not separate $M'$, then it defines a double cover of $M'$, associated to the map $\pi_1(M) \surj \ZZ_2$ sending $\gamma \mapsto \gamma \cdot D$. In the proof of Proposition~\ref{prop:embed-krs-inv}, we can then replace $M'$ by this double cover after applying Theorem~\ref{thm:embed-krs}, ultimately obtaining a manifold with even Euler characteristic.

If $|\mathcal N_n| = 4$, the two conditions $\varphi^n[M] = 1$ and $w_n(M) = 0$ determine the cobordism class of $M$. More precisely, we can state:

\begin{prop}
    There exists a connected, closed hyperbolic $4$-manifold $M$ of even Euler characteristic, such that $[M] = [\RP^4 \sqcup (\RP^2 \times \RP^2)] \ne 0$.
\end{prop}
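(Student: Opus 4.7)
The plan is to run the construction of Theorem~\ref{thm:main} with $n = 4$ starting from the good $3$-manifold with involution $(D, \tau_{11})$ of the previous subsection, while simultaneously enforcing even Euler characteristic through the modification of the Kolpakov--Reid--Slavich embedding described at the start of this subsection, and then to identify the resulting cobordism class by computing two Stiefel--Whitney numbers.

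Concretely, I would first recall that $I(4, D, \tau_{11}) = 1$, as checked via Tables~\ref{tab:Ind} and~\ref{tab:dod-small-cover}. Then I would apply Proposition~\ref{prop:embed-krs-inv} once to embed $(D, \tau_{11})$ into a good $(M', \tau')$, but with the following tweak: if $D$ separates $M'$ then $M'$ is already a double and $\chi(M')$ is automatically even, while otherwise I would replace $M'$ by the canonical double cover induced by the mod-$2$ intersection with $[D]$, which doubles the Euler characteristic while preserving arithmeticity, the status of $D$ as a fixed component with trivial normal bundle on which $\tau'$ acts by negation, and the existence of an extension of $\tau_{11}$ defined over $k$. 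I would then carry out the single inductive step of Theorem~\ref{thm:main}, possibly applying the twist $T(M', \tau', D, \tau_{11})$; since twisting is a cut-and-paste along the closed $3$-manifold $D$ with $\chi(D) = 0$, it preserves Euler characteristic. The output is a connected good hyperbolic $4$-manifold $M$ with $\varphi^4[M] = 1$ and $w_4(M) \equiv \chi(M) \equiv 0 \pmod 2$.

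To identify $[M]$, I would invoke Thom's theorem, which gives $\mathcal N_4 \simeq \ZZ_2\langle [\RP^4]\rangle \oplus \ZZ_2\langle [\RP^2\times\RP^2]\rangle$. Both generators have Euler characteristic $1$, so $w_4$ takes the value $1$ on each; on the other hand $\varphi^4[\RP^4] = 1$ by definition (since $[\RP^4] = x_4$), while $\varphi^4[\RP^2\times\RP^2] = 0$ since that class is decomposable. Hence $(\varphi^4, w_4)\colon \mathcal N_4 \to \ZZ_2\oplus\ZZ_2$ is a group isomorphism, and the unique preimage of $(1, 0)$ is $[\RP^4] + [\RP^2\times\RP^2] = [\RP^4 \sqcup (\RP^2\times\RP^2)]$, which is manifestly nonzero.

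The step I expect to demand the most care is verifying that the replacement of $M'$ by a double cover in the nonseparating case preserves all four conclusions of Proposition~\ref{prop:embed-krs-inv}, so that the formula of Theorem~\ref{thm:phi-n-sum-Ind} (and hence the inductive step of Theorem~\ref{thm:main}) applies verbatim to the modified embedding. Beyond this compatibility check, the argument reduces to bookkeeping: the table lookup giving $I(4, D, \tau_{11}) = 1$ and the $2\times 2$ evaluation of Stiefel--Whitney numbers on the basis of $\mathcal N_4$.
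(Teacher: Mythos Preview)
Your proposal is correct and follows essentially the same approach as the paper: you invoke the construction behind Theorem~\ref{thm:main20} (via Theorem~\ref{thm:main} applied to $(D,\tau_{11})$) together with the even-$\chi$ modification of Proposition~\ref{prop:embed-krs-inv} discussed just before the proposition, and then separate the two indecomposable candidates in $\mathcal N_4$ using $\varphi^4$ and $w_4$. The paper's own proof is simply a terser version of this, citing Theorem~\ref{thm:main20} and the preceding discussion directly rather than unpacking the single inductive step.
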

\begin{proof}
    The cobordism group $\mathcal N_4$ is generated by $x_4$ and $x_2^2$. By Theorem~\ref{thm:main20}, there exists a connected, closed hyperbolic $4$-manifold $M$ with $\varphi^4[M] = 1$. Hence, $[M]$ is either $[\RP^4]$ or $[\RP^4 \sqcup (\RP^2\times \RP^2)]$. From the above discussion, we may also assume that $\chi(M)$ is even, which implies  $[M] = [\RP^4 \sqcup (\RP^2\times \RP^2)]$.
\end{proof}
\begin{remark}
    In higher even dimensions $n$, there are at least three linearly independent generators of $\mathcal N_n$, that is, $x_n$, $x_{n-2}x_2$ and $x_2^{n/2}$. Thus, $\varphi^n$ and $w_n$ do not suffice to determine the cobordism class.
\end{remark}
\clearpage
\setlength\bibitemsep{1ex}
\printbibliography[heading=bibintoc, title={References}]
\end{document}